\titleformat{\subsubsection}[runin]
{\normalfont\normalsize\bfseries}{\thesubsubsection}{1em}{}
\numberwithin{equation}{section}
\newcommand{\inclu}[0] {\ar@{^{(}->}}
\newtheorem{thm}{Theorem}[section]
\newtheorem{theorem}{Theorem}[section]
\newtheorem{conjecture}[thm]{Conjecture}
\crefname{claim}{claim}{claims}
\Crefname{claim}{Claim}{Claims}
\crefname{lem}{lemma}{lemmas}
\Crefname{lem}{Lemma}{Lemmas}
\crefname{algorithm}{algorithm}{algorithms}
\Crefname{algorithm}{Algorithm}{Algorithms}
\theoremstyle{definition}
\theoremstyle{definition}
\theoremstyle{definition}
\newtheorem*{claim*}{Claim}
\crefname{figure}{Figure}{Figures}
\definecolor{blue}{RGB}{0,0,0} 
\pgfplotsset{compat=1.18}
\begin{document}

    \title{Some New Insights from Highly Optimized Polyhedral Passages}

    \author{Raj Gosain \qquad Benjamin Grimmer\footnote{Johns Hopkins University, Department of Applied Mathematics and Statistics, \url{grimmer@jhu.edu}}}
    \date{}
    \maketitle
    
    \begin{abstract}
        A shape possesses Rupert's property if a hole can be cut through it such that a second identical copy of the shape can cleanly pass straight through the interior of the first. Such a passage proving cubes are Rupert was first shown more than 300 years ago. It remains open whether every polyhedron in three dimensions is Rupert. We propose a customized subgradient method providing high-accuracy local numerical optimization of the quality of a passage for a given polyhedron. From extensive numerical searches, we improve these best-known passages for more than half of the Platonic, Archimedean, and Catalan solids and for numerous Johnson solids. Our high accuracy solves support a new conjecture of a simple form for the Tetrahedron's optimal passage. Despite our computational search, three Archimedean and two Catalan solids remain open, providing further negative evidence against the conjecture that all polyhedrons are Rupert.
    \end{abstract}

    \section{Introduction.}
Given two equally large cubes, one can pass the first through a hole strictly inside the second. This surprising fact is attributed to J. Wallis and Prince Rupert of the Rhine in the 17th century. In the 18th century, Peter Nieuwland improved Wallis's construction, proving that such a passage can even be made by a cube up to $\mu_\mathtt{cube} = \frac{3\sqrt{2}}{4}$ times larger than the other. This passage, shown in Figure~\ref{fig:platonic} (a), allows for the largest possible rescaling; see~\cite{bezdek2021} for a proof. This largest possible rescaling $\mu$ is known as the Nieuwland constant.

This is not a property unique to cubes. We say a compact convex set $S\subseteq \mathbb{R}^3$ is {\it Rupert} if such a passage exists with $\mu_S>1$; see Section~\ref{sec:methods} for a formal definition. The unit ball is not Rupert (i.e., for the ball, $\mu=1$), so this property does not hold universally. Attention has since shifted to identifying which polyhedra have this property.

Over the past 60 years, several works have considered classic, highly symmetric polyhedra beyond cubes. {\color{blue} Such polyhedra represent a fruitful middle ground between the cube and the ball in which to investigate Rupertness.} First, in 1968, Scriba~\cite{scriba1968} proved that the octahedron and tetrahedron are both Rupert. The remaining two Platonic solids, the dodecahedron and the icosahedron, were proven Rupert in 2007 by Jerrad et al.~\cite{jerrad2007}. In each case, this was done by demonstrating lower bounds on their Nieuwland constant larger than one. From this discovery, Jerrad et al.~made the following optimistic conjecture:
\begin{conjecture}[Jerrad et al.~\cite{jerrad2007}]\label{conj:EveryPolyhedron}
    Every convex polyhedron in $\mathbb{R}^3$ is Rupert.
\end{conjecture}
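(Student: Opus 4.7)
The plan is to attack Conjecture~\ref{conj:EveryPolyhedron}, which the paper itself frames as open and against which the abstract hints at negative numerical evidence. First, I would recast Rupertness as a finite-dimensional feasibility problem: for a polyhedron $P$, seek rotations $R_1, R_2 \in \mathrm{SO}(3)$, a translation $t \in \mathbb{R}^2$, and a scaling $\mu > 1$ such that the projection of $\mu R_1 P + t$ onto the $xy$-plane lies strictly inside the convex hull of the projection of $R_2 P$. The conjecture asserts this system always admits a solution, so the task is to produce a witness depending in a controlled way on the combinatorial data of $P$.

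The approach I would try first is a topological/density argument. The set of Rupert polyhedra is open in the Hausdorff topology on compact convex bodies, since strict containment of projections is robust to small perturbations. One would hope to establish density (by showing every polyhedron can be perturbed into a Rupert one) and then close the remaining gap via a uniform lower bound on $\mu - 1$ expressed in terms of, e.g., the largest face diameter or a dihedral angle deficit. A complementary, more constructive approach is to use only a single face: align one copy so that a chosen face is perpendicular to the projection direction while tilting the second copy slightly, and then prove quantitatively that the resulting pair of shadows admits strict containment with $\mu$ controlled by that face's geometry. Combining these, one might try to carve the problem into polyhedra ``far from the ball'' (handled by a quantitative face-based construction) and polyhedra ``near the ball'' (handled by matching a Rupert neighborhood of a known solid).

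The main obstacle I anticipate is that the Euclidean ball is not Rupert and is a Hausdorff limit of polyhedra with uniformly shrinking faces. So no proof strategy can proceed by pure continuity in the shape; any candidate invariant bounding $\mu - 1$ from below must depend on a specifically polyhedral feature that collapses only in the spherical limit. The paper's inability to improve passages for several near-spherical Archimedean and Catalan solids is consistent with the suspicion that $\mu - 1$ may decay to zero along natural refinement sequences, and even cross zero for some intermediate polyhedron, producing a counterexample rather than a proof. For this reason, I expect the truly difficult step is combinatorial: identifying an invariant of a convex polyhedron that simultaneously certifies Rupertness and excludes the ball, not the technical optimization argument around it.
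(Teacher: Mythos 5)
There is no proof to compare against: the statement you were asked to prove is Conjecture~\ref{conj:EveryPolyhedron}, which the paper explicitly treats as \emph{open} and, if anything, leans against --- it reports that the rhombicosidodecahedron, snub cube, snub dodecahedron, and two Catalan solids resisted a 180-hour high-precision search, and it endorses the Steininger--Yurkevich conjecture that three of these are \emph{not} Rupert. Your proposal is candid that it is a research plan rather than an argument, but as submitted it contains no completed step, so the verdict has to be that the gap is the entire proof.

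Concretely, each of your three strategies stalls at exactly the point you would need a new idea. (i) Openness of the Rupert property in the Hausdorff topology is plausible (strict containment of projections is stable under small perturbations), but the ``density'' half is equivalent to the conjecture itself, and the proposed uniform lower bound on $\mu - 1$ cannot exist: the paper's own data shows $\mu - 1$ as small as $8\times 10^{-5}$ for some Johnson solids and $\mu=1$ for the ball, so any such bound must degenerate along refinement sequences, which is precisely the regime where the hard cases live. (ii) The single-face/shadow construction is the classical approach that already fails for the open solids --- these are nearly spherical with many small faces, and no choice of distinguished face yields strict containment; this is why the literature resorted to seven-parameter numerical optimization in the first place. (iii) The split into ``far from the ball'' and ``near the ball'' presupposes a quantitative invariant certifying Rupertness that excludes the ball, which you correctly identify as the missing combinatorial ingredient --- but identifying it \emph{is} the open problem. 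Your analysis of the obstruction is sound and consistent with the paper's negative evidence; it just does not constitute progress toward either a proof or a counterexample.
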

{\color{blue} After resolving the Platonic solids, interest turned to the family of Archimedean solids.} Passages for 8 out of 13 Archimedean solids were identified by Chai et al.~\cite{Chai2018}. Hoffmann~\cite{Hoffmann19} and Lavau~\cite{Lavau19} subsequently showed that the truncated tetrahedron is Rupert. By characterizing Rupert's property as the existence of a solution to a certain (large) system of polynomial inequalities and applying a randomized computer search, one additional Archimedean solid, the truncated icosidodecahedron, was proved to be Rupert by Steininger and Yurkevich~\cite{steininger2021}. Further, their technique resolved 9 of the 13 Catalan solids, {\color{blue} the family of face-transitive shapes dual to the Archimedean solids}, and 82 of the 92 Johnson solids, {\color{blue} the family of all convex polyhedra with regular polygons as faces~\cite{Johnson1966}}. Furthering this direction of computer-aided search, Fredriksson~\cite{fredriksson2024} converted computing the Nieuwland constant into a certain four-dimensional nonsmooth optimization problem, seeking a passage facilitating the maximum possible rescaling. Locally, numerically solving this optimization problem found improved lower bounds on the Nieuwland constant for several Archimedean, Catalan, and Johnson solids. Although no progress was made on identifying new Rupert Archimedean solids therein, two new Catalan solids and five new Johnson solids were shown to be Rupert.

\begin{figure}
    \centering
    \begin{subfigure}[b]{0.3\textwidth}
        \centering
        \includegraphics[width=\linewidth, trim={2.1cm 2.1cm 2.1cm 2.1cm}, clip]{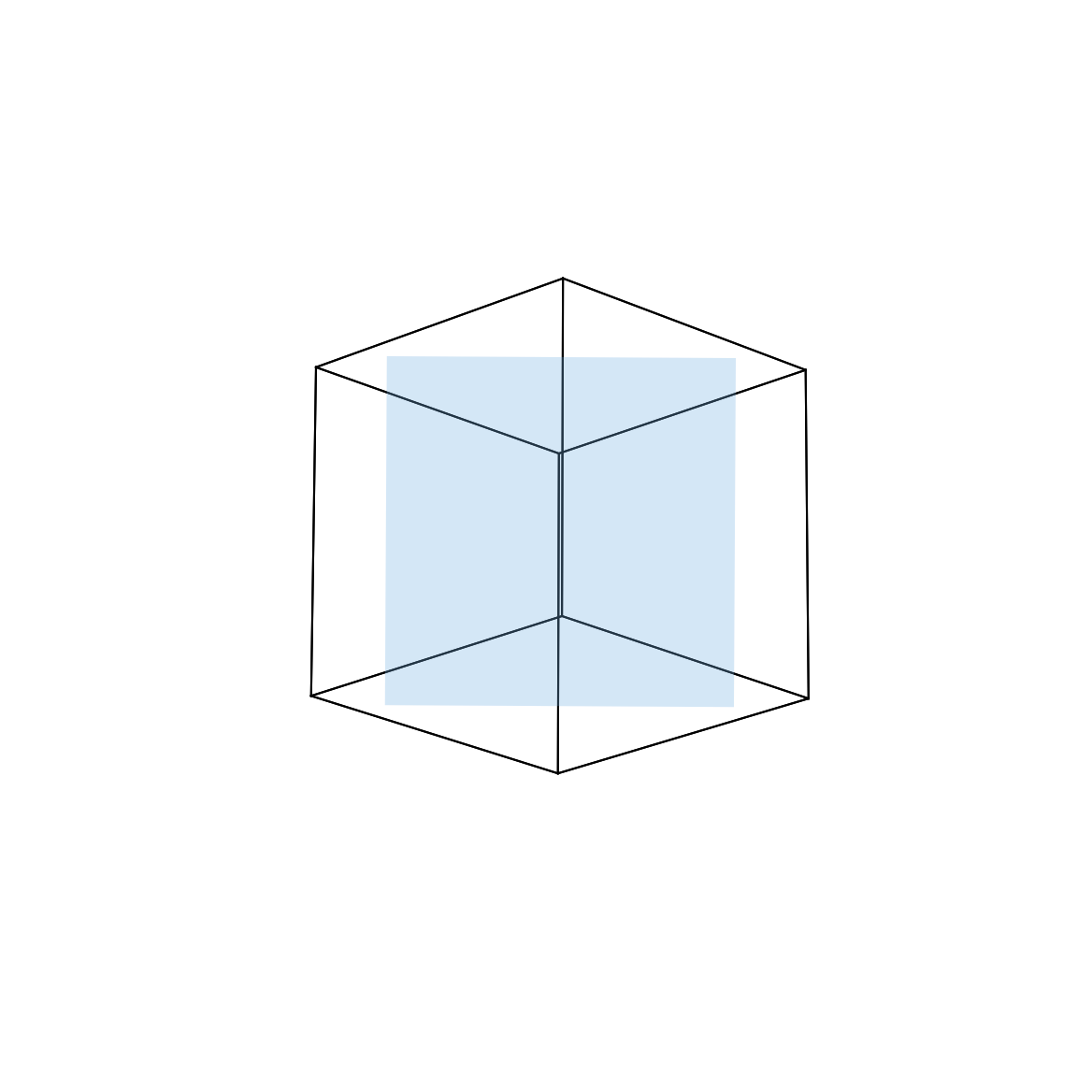}
        \caption{\centering Cube\newline $\mu=\frac{3\sqrt{2}}{4}$}
        \label{fig:cube}
    \end{subfigure}
    \hfill
    \begin{subfigure}[b]{0.3\textwidth}
        \centering
        \includegraphics[width=\linewidth, trim={2.1cm 2.1cm 2.1cm 2.1cm}, clip]{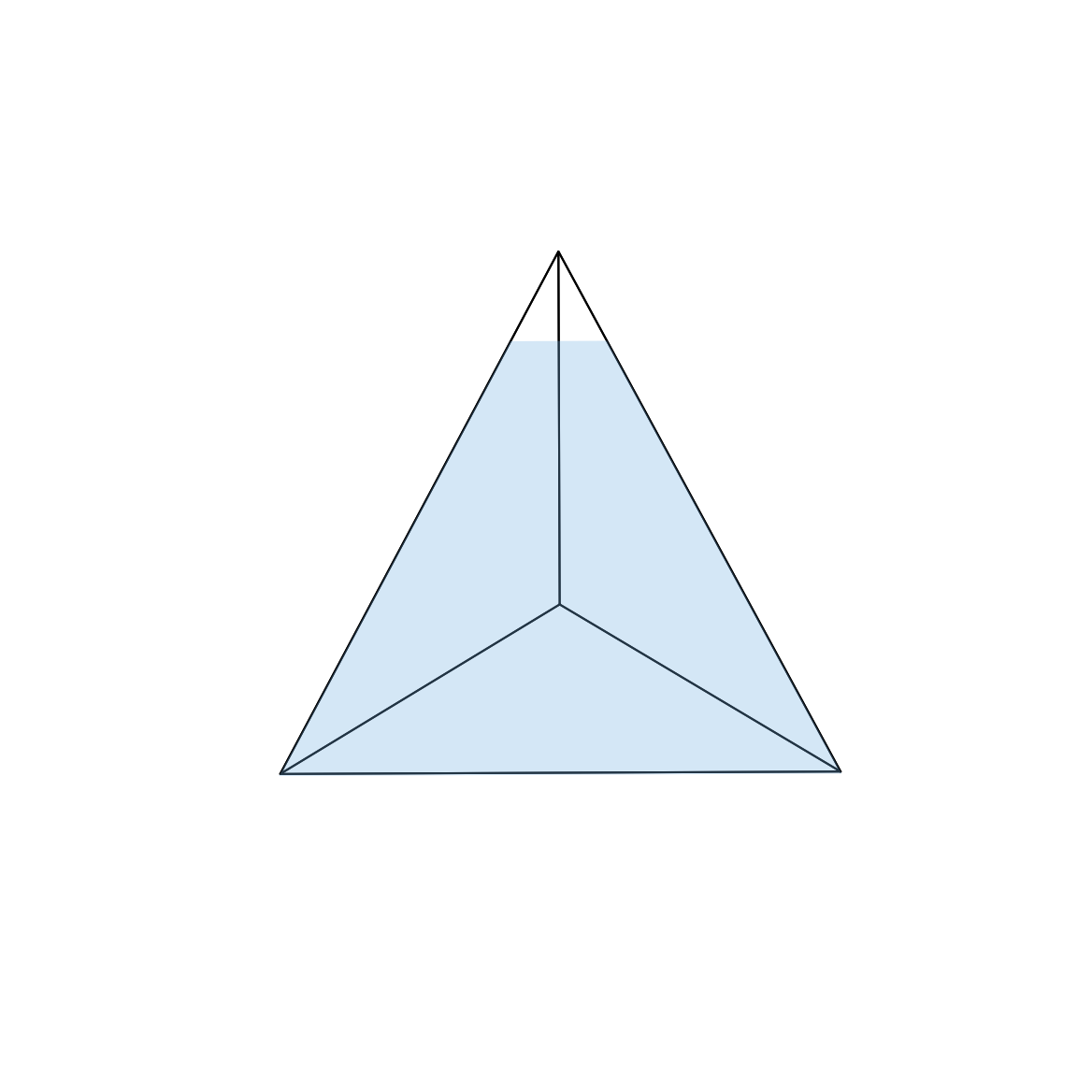}
        \caption{\centering Tetrahedron\newline $\mu \geq \mathbf{\frac{\sqrt{6}}{1+\sqrt{2}}}>\mathbf{1.014611872334}$}
        \label{fig:tetrahedron}
    \end{subfigure}
    \hfill
    \begin{subfigure}[b]{0.3\textwidth}
        \centering
        \includegraphics[width=\linewidth, trim={2.1cm 2.1cm 2.1cm 2.1cm}, clip]{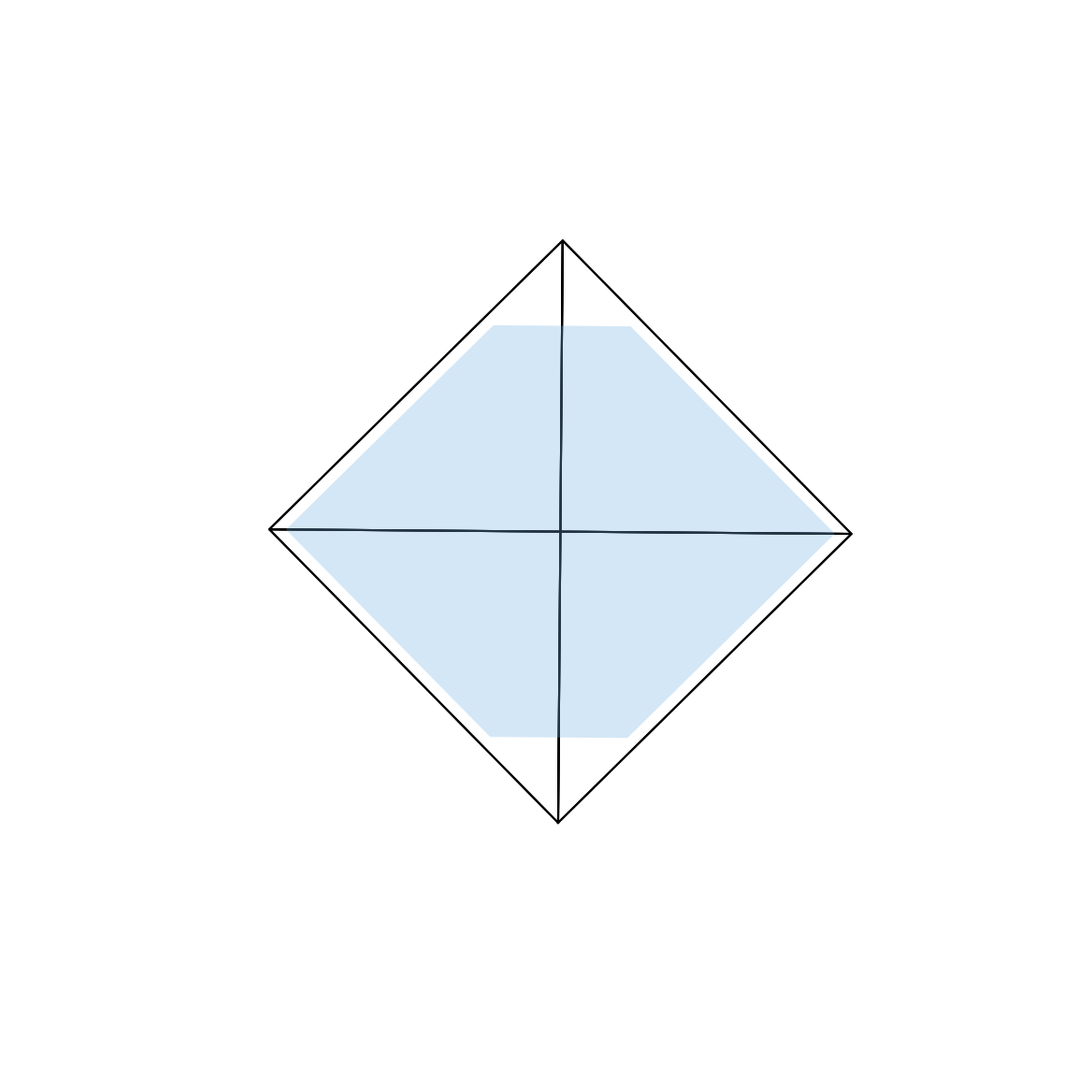}
        \caption{\centering Octahedron\newline $\mu\geq 1.060660171779$}
        \label{fig:octahedron}
    \end{subfigure}
    \vskip\baselineskip
    \begin{subfigure}[b]{0.3\textwidth}
        \centering
        \includegraphics[width=\linewidth, trim={2.1cm 2.1cm 2.1cm 2.1cm}, clip]{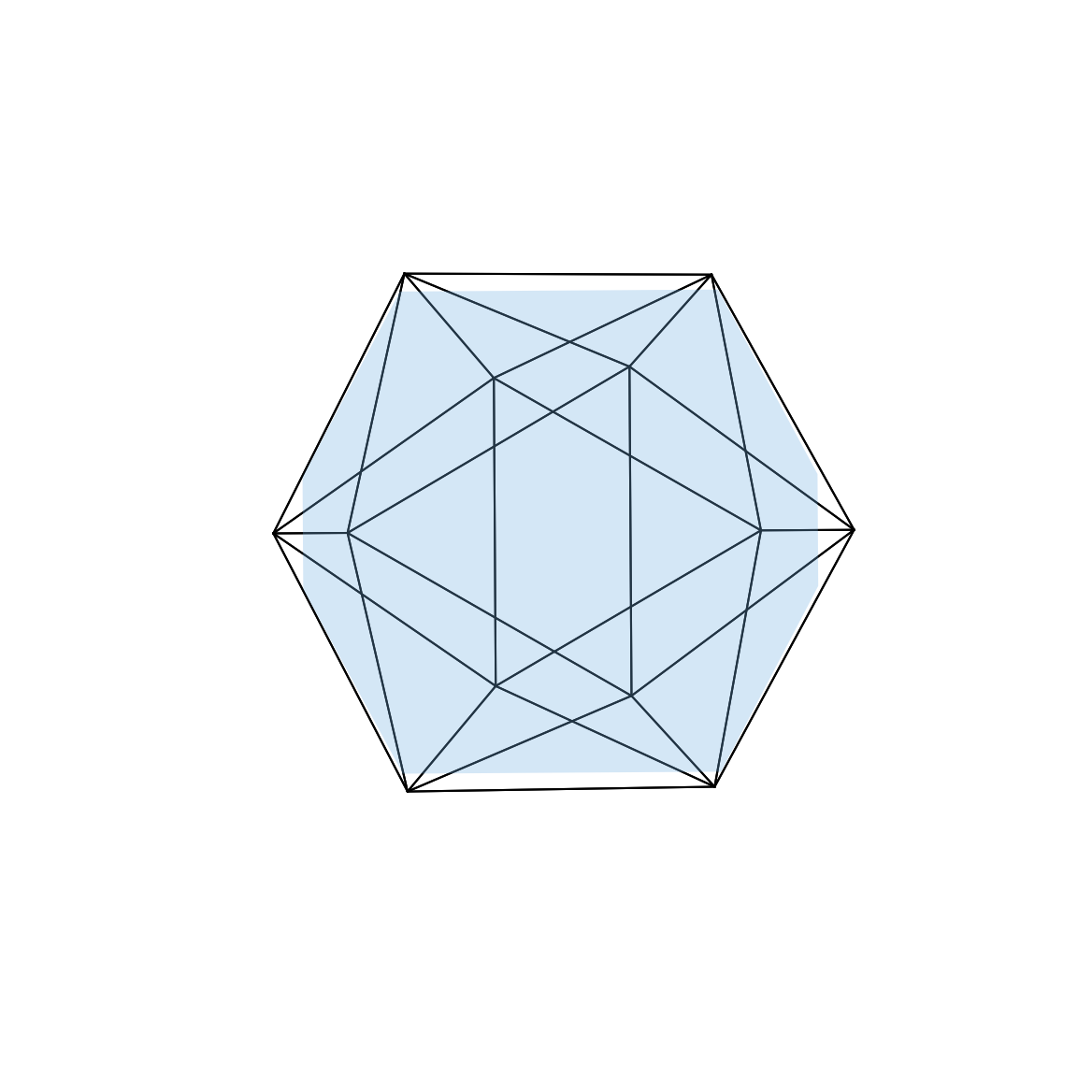}
        \caption{\centering Icosahedron\newline $\mu\geq \mathbf{1.010823060752}$}
        \label{fig:icosahedron}
    \end{subfigure}
    \begin{subfigure}[b]{0.3\textwidth}
        \centering
        \includegraphics[width=\linewidth, trim={2.1cm 2.1cm 2.1cm 2.1cm}, clip]{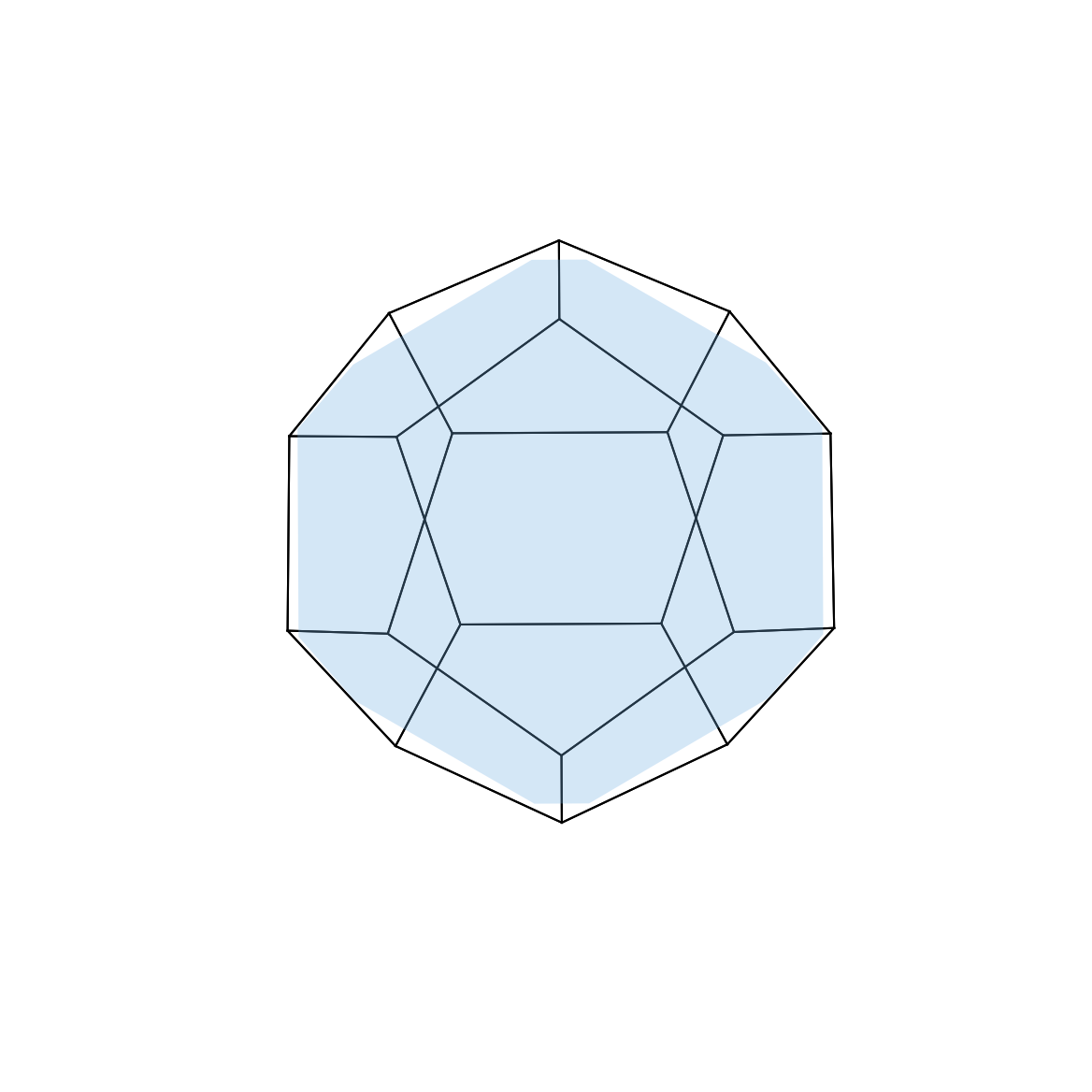}
        \caption{\centering Dodecahedron\newline $\mu\geq \mathbf{1.010823060752}$}
        \label{fig:dodecahedron}
    \end{subfigure}
   \caption{Best-known passages for the five Platonic solids with the blue shaded regions denoting the associated hole in the displayed polyhedron that a rotated copy can pass through (traveling perpendicular to the page). Bolded lower bounds on Nieuwland constants are new to this work. Matching upper bounds for (b)-(e) remain open. Table~\ref{tab:platonic} provides conjectures for their optimal values.}
    \label{fig:platonic}
\end{figure}

\subsection{Our Contributions.}
{\color{blue} Here we take a new approach to computing Nieuwland constants, leveraging ideas from nonsmooth optimization.} Following Fredriksson's suggestion (see~\cite{fredriksson2024}) that the effectiveness of his approach was hindered by numerical precision, our customized method yields local optima with an accuracy of $10^{-12}$. Applying our local search improved the best-known passages for more than half of the Platonic, Archimedean, Catalan, and Johnson solids. No new passages were found for the remaining open cases, indicating a need for tools beyond high-precision local optimization. Our high-precision solutions support new conjectures on simple forms of the optimal passage for the tetrahedron and cuboctahedron. {\color{blue} Section~\ref{sec:methods} formalizes our numerical approach and then Section~\ref{sec:results} presents the resulting experiments and insights.}

\section{Preliminaries and Methodology.} \label{sec:methods}
Throughout, we consider compact, convex polyhedra $P$, without loss of generality having $0\in\mathrm{int\ }P$. For computation, a convex polyhedron can be represented in two equivalent dual forms. One can represent $P$ by its $m$ vertices $v_1,\dots, v_m$ such that $P=\mathtt{convexHull}\{v_i\}_{i=1}^m$ or by its $n$ faces $w_1,\dots, w_n$ such that $P = \{z \mid w_j^Tz \leq 1,\ \text{for}\  j=1,\dots, n\}$. We denote these representations by $\mathtt{Vertices}(P)=\{v_i\}_{i=1}^m$ and $\mathtt{Faces}(P)=\{w_j\}_{j=1}^n$.

For a given polyhedron $P\subseteq\mathbb{R}^3$, Rupert's property is that there exists a polyhedron $Q$ of the same shape and size and a direction $v\in\mathbb{R}^3$ such that
$ P \setminus \{Q + tv \mid t\in\mathbb{R}\}$ is set with a hole.
Steininger and Yurkevich~\cite{steininger2021} provided the following computationally amenable characterization of Rupert's property where seven parameters describe a candidate passage.
\begin{theorem}[Steininger and Yurkevich~\cite{steininger2021}]
A polyhedron $P$ is Rupert if there exist angles $\theta_p,\phi_p$ and $\theta_q,\phi_q$, a rotation $\alpha$, and a translation $(u,v)$
such that
$$ (T_{u,v}\circ R_\alpha \circ M_{\theta_p,\phi_p})(P)\subset \mathrm{int\ }(M_{\theta_q,\phi_q}(P))$$    
where the three maps applied elementwise above are $M_{\theta,\phi}$, denoting a rotation in $\mathbb{R}^3$ followed by projection into $\mathbb{R}^2$ defined as
\begin{equation}\label{m-eq}
M_{\theta, \phi} = \left(
\begin{array}{ccc}
\sin \phi & \sin \theta \cos \phi & \cos \theta \cos \phi\\
0 & \cos \theta & -\sin \theta
\end{array}
\right),
\end{equation}
$R_\alpha$, denoting a rotation in $\mathbb{R}^2$, defined as
\[
R_{\alpha} = \left(
\begin{array}{cc}
\cos \alpha & -\sin \alpha\\
\sin \alpha & \cos \alpha\\
\end{array}
\right),
\]
and $T_{u,v}$, denoting a translation in $\mathbb{R}^2$, defined as
\[
T_{u,v}(x,y) = (x+u, y+v).
\]
\end{theorem}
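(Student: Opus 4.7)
The plan is to unpack the 2D shadow hypothesis into a genuine 3D rigid motion and passage direction. First, I would observe that each projection $M_{\theta,\phi}$ factors as $\Pi\, A_{\theta,\phi}$, where $\Pi:\mathbb{R}^3\to\mathbb{R}^2$ drops the third coordinate and $A_{\theta,\phi}$ is the $3\times 3$ rotation matrix whose first two rows are the (orthonormal) rows of $M_{\theta,\phi}$ and whose third row is their cross product. Analogously, lift $R_\alpha$ to the 3D rotation $\tilde R_\alpha$ about the $z$-axis (so that $\Pi\tilde R_\alpha = R_\alpha\Pi$) and lift $T_{u,v}$ to the translation by $(u,v,0)\in\mathbb{R}^3$.

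Next, I would use these lifts to construct the congruent copy $Q$ and the passage direction. Setting
\[
Q \;=\; A_{\theta_q,\phi_q}^{-1}\Bigl(\tilde R_\alpha\, A_{\theta_p,\phi_p}(P) + (u,v,0)\Bigr),\qquad v_\star \;=\; A_{\theta_q,\phi_q}^{-1}(e_3),
\]
gives $Q$ as the image of $P$ under a composition of 3D rotations followed by a translation, hence a rigid motion. Applying $\Pi A_{\theta_q,\phi_q}$ to $Q$ and using $\Pi A_{\theta_q,\phi_q}A_{\theta_q,\phi_q}^{-1}=\Pi$ together with $\Pi\tilde R_\alpha = R_\alpha\Pi$ yields
\[
\Pi A_{\theta_q,\phi_q}(Q) \;=\; T_{u,v}R_\alpha M_{\theta_p,\phi_p}(P) \;\subset\; \mathrm{int\ }\Pi A_{\theta_q,\phi_q}(P),
\]
by the hypothesis. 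So in the rotated frame in which $v_\star$ is the vertical axis, the shadow of $Q$ lies strictly inside the shadow of $P$.

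Finally, I would convert this strict shadow containment into a genuine passage. The tube $\bigcup_{t\in\mathbb{R}}(Q+tv_\star)$ is an infinite prism whose shadow in direction $v_\star$ equals $\Pi A_{\theta_q,\phi_q}(Q)$, a subset of $\mathrm{int\ }\Pi A_{\theta_q,\phi_q}(P)$. Hence for each point $x$ in the shadow of the tube, the fiber through $x$ meets the compact convex set $P$ in a nondegenerate segment, because $x$ lies in the interior of $P$'s shadow. Consequently the tube enters and exits $P$ through distinct portions of its boundary without grazing any side, and removing it from $P$ carves a clean tunnel through the interior --- exactly the Rupert property stated above the theorem.

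The hard part is not any single step but the bookkeeping linking the 2D shadow picture to the 3D geometry: one must verify the factorization $M_{\theta,\phi} = \Pi A_{\theta,\phi}$ from the explicit form of $M_{\theta,\phi}$ (routine, since its two rows are orthonormal), confirm that $Q$ is congruent to $P$ (immediate from closure of the rotation group under composition), and use the strict interior containment in the hypothesis to rule out tangential contact that would spoil a clean passage. The essential insight is that strict containment in the optimization-friendly 2D picture automatically produces an honest 3D tunnel.
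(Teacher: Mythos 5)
The paper does not actually prove this theorem---it is imported verbatim from Steininger and Yurkevich~\cite{steininger2021} as a cited result---so there is no in-paper argument to compare against. Your reconstruction is the standard lifting argument and is essentially correct: the rows of $M_{\theta,\phi}$ are indeed orthonormal, so $M_{\theta,\phi}=\Pi A_{\theta,\phi}$ for a genuine rotation $A_{\theta,\phi}$, the intertwining identities $\Pi\tilde R_\alpha=R_\alpha\Pi$ and $\Pi A_{\theta_q,\phi_q}A_{\theta_q,\phi_q}^{-1}=\Pi$ hold, and your $Q$ and $v_\star$ realize the 2D containment as a 3D prism whose cross-section sits strictly inside the shadow of $P$ along $v_\star$. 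Two small points are worth tightening. First, the paper's operative definition of Rupert is quantitative ($\mu_P>1$), not merely ``$P$ minus a tube has a hole''; to get this you should add the one-line compactness step: $T_{u,v}R_\alpha M_{\theta_p,\phi_p}(P)$ is compact and contained in the open set $\mathrm{int\ }M_{\theta_q,\phi_q}(P)$, and $M_{\theta_q,\phi_q}(P)$ contains the origin in its interior, so some rescaling by $\mu>1$ still satisfies the containment, whence $\mu_P>1$. Second, your claim that each fiber over a point of the tube's shadow meets $P$ in a nondegenerate segment uses that the interior of the projection of a convex body equals the projection of its interior; this is true but deserves a word, since it is exactly where convexity and $\mathrm{int\ }P\neq\emptyset$ enter. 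With those two sentences added, the argument is complete.
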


For notational ease, we collect the above seven parameters defining a passage as a single vector $x=(u,v,\theta_p,\phi_p,\alpha,\theta_q,\phi_q)\in \mathbb{R}^7$. Given a proposed passage $x$, one can define the largest rescaling such that the above necessary condition holds as
$$\mu(x) = \sup\{ \mu \geq 0 \mid \mu \cdot (T_{u,v}\circ R_\alpha \circ M_{\theta_p,\phi_p})(P)\subset M_{\theta_q,\phi_q}(P) \} \ . $$
Then the Nieuwland constant for $P$ is given by
$$ \mu_P = \max_{x\in\mathbb{R}^7} \mu(x)$$
and Rupert's property holds for $P$ if and only if $\max \mu(x)>1$.

Our approach to proving Rupert's property is to locally solve for $\max \mu(x)$ numerically from {\it many} random initializations. This is similar to the approach taken by Fredriksson~\cite{fredriksson2024}, who further reduced the Nieuwland constant to a four-dimensional maximization problem $\max f(\theta_p,\phi_p,\theta_q,\phi_q)$. Evaluating his objective $f$ at given values $(\theta_p,\phi_p,\theta_q,\phi_q)$ corresponds to maximizing a four-dimensional convex quadratic over $mn$ linear inequality constraints. This task is nontrivial. Fredriksson accomplishes this by evaluating the objective at every vertex of the polyhedron in $\mathbb{R}^4$ defined by his $mn$ constraints. However, there can be up to $O(mn^2)$ such vertices over which to compute this maximum (see~\cite{agarwal1998largest}), potentially limiting his approach to only polyhedra with fewer vertices and faces.

In contrast, the objective function that we use, $\mu(x)$, is fairly cheap to compute: The containment $\mu \cdot (T_{u,v}\circ R_\alpha \circ M_{\theta_p,\phi_p})(P)\subset M_{\theta_q,\phi_q}(P)$ requires that each vertex of the two-dimensional polyhedron $\mu \cdot (T_{u,v}\circ R_\alpha \circ M_{\theta_p,\phi_p})(P)$ lies on the correct side of each face of the two-dimensional polyhedron $M_{\theta_q,\phi_q}(P)$. This corresponds to at most $O(mn)$ linear inequalities that $\mu(x)$ must satisfy, giving the following formula for $\mu(x)$ as a finite minimum of at most $O(mn)$ terms:
\begin{align}
    \mu(x) &= \begin{cases}
    \max & \mu \\
    \mathrm{s.t.} & \mu w_j^Tv_i \leq 1 \quad \forall w_j\in\mathtt{Faces}( M_{\theta_q,\phi_q}(P)),\\
    &\qquad\qquad\qquad\ \ v_i \in \mathtt{Vertices}((T_{u,v}\circ R_\alpha \circ M_{\theta_p,\phi_p})(P))
    \end{cases}\nonumber\\
    & = \min\bigg\{\frac{1}{w_j^Tv_i} \mid w_j^Tv_i >0, \ w_j\in\mathtt{Faces}( M_{\theta_q,\phi_q}(P)), \nonumber\\
    &\qquad\qquad\qquad\quad\ \ v_i \in \mathtt{Vertices}((T_{u,v}\circ R_\alpha \circ M_{\theta_p,\phi_p})(P)) \bigg\} \ .
\end{align}
For ease, we denote
$\mu_{i,j}(x) = \begin{cases}
    \frac{1}{w_j^Tv_i} & \text{ if } w_j^Tv_i>0\\
    \infty & \text{otherwise}
\end{cases}
$,
giving $\mu(x) = \min\{\mu_{i,j}(x)\}$. Note that computing the faces of the two-dimensional convex polyhedron $M_{\theta_p,\phi_p}(P)$ can be done efficiently via Graham's scan~\cite{Graham1972}. Hence, despite its three additional parameters, $\mu(x)$'s lower evaluation cost makes it often more amenable to numerically optimize than the objective $f$ considered in~\cite{fredriksson2024}.

\paragraph{A Nonsmooth Trust-Region Method.}
Note $\mu(x)$ is not differentiable everywhere, and in fact, we find it is never differentiable at its local maximizers. Hence, to maximize its value, we use a careful subgradient-type method. Since the nonsmoothness here is entirely due to the finite minimum structure defining $\mu(x) = \min\{\mu_{i,j}(x)\}$, our proposed optimization method linearizes the smooth inner functions $\mu_{i,j}(x)$ at the current iterate $x_k$ and then maximizes the resulting nonsmooth, piecewise linear problem within a trust region $\|x-x_k\|_2 \leq \delta_k$ to produce a search direction. (Note, as $\delta_k$ shrinks, the linearizations $ \mu_{i,j}(x_k) + \nabla \mu_{i,j}(x_k)^T(x-x_k)$ become an increasingly accurate approximation of $\mu_{i,j}(x)$.) 

Formally, at each iteration $k$, we compute a search direction $s_k$ as follows:
\begin{equation}
    \label{eq:subgrad}
    s_k = \mathrm{argmax}_{\|s\|_2\leq \delta_k} \min_{i,j}\{ \mu_{i,j}(x_k) + \nabla \mu_{i,j}(x_k)^Ts\} \ . 
\end{equation}
Then we set the next iterate and trust region size as follows:
$$ \begin{cases}
    x_{k+1} &= x_k + 2^{-n} s_k\\
    \delta_{k+1} &= 2\cdot 2^{-n}\delta_k
\end{cases}$$
where $n\geq 0$ is the smallest integer such that $\mu(x_{k+1}) \geq \mu(x_{k})$. Thus, the trust region size naturally decreases as higher accuracy models are needed to achieve ascent. Note the additional factor of two in the definition of $\delta_{k+1}$ allows trust region sizes to grow over time as well if computed steps $s_k$ are accurate as is (i.e., with $n=0$).
We continue this iteration until the trust region size decreases below a target accuracy, $10^{-12}$ in our experiments. 

This method is very similar to the prox-linear method proposed and studied in~\cite{Drusvyatskiy2019} for more general composite optimization. Here we use an explicit norm bound $\|s\|_2\leq \delta_k$, whereas the authors of~\cite{Drusvyatskiy2019} use a proximal penalty $+\frac{\rho_k}{2}\|s\|_2^2$ in the objective. Considering the first-order optimality conditions of these two approaches verifies they are equivalent with $\rho_k$ acting as a Lagrange multiplier for the constraint $\|s\|_2^2\leq \delta_k^2$.

\paragraph{Computation of Steepest Ascent Directions.}
One can further verify that produced passages $x$ are locally optimal by (numerically) computing the steepest ascent direction of $\mu$ at $x$. To this end, we leverage the following variational description of directional derivatives of $\mu$ at $x\in\mathbb{R}^7$ in direction $v\in\mathbb{R}^7$ of
$$ \mu'(x; v) := \lim_{t\rightarrow 0^+} \frac{\mu(x+tv) - \mu(x)}{t} = \min\{\nabla \mu_{i,j}(x)^Tv \mid \mu_{i,j}(x)=\mu(x)\}\ .$$
Consequently, letting $\Delta=\{\lambda \mid \sum \lambda_{i,j} =1, \lambda\geq 0 \}$ denote the simplex, the steepest ascent direction of $\mu$ at a given $x$ is given by
\begin{align*}
    &\max_{\|v\|_2\leq 1} \min_{i,j}\{\nabla \mu_{i,j}(x)^Tv \mid  \mu_{i,j}(x)=\mu(x)\}\\
    & =\max_{\|v\|_2\leq 1} \min_{\substack{\lambda\in \Delta \\ \lambda_{i,j}(\mu_{i,j}(x)-\mu(x))=0}} \sum_{i,j} \lambda_{i,j} \nabla \mu_{i,j}(x)^Tv\\
    & = \min_{\substack{\lambda\in \Delta \\ \lambda_{i,j}(\mu_{i,j}(x)-\mu(x))=0}} \max_{\|v\|_2\leq 1} \sum_{i,j} \lambda_{i,j} \nabla \mu_{i,j}(x)^Tv\\
    & = \min_{\substack{\lambda\in \Delta \\ \lambda_{i,j}(\mu_{i,j}(x)-\mu(x))=0}} \|\sum_{i,j} \lambda_{i,j} \nabla \mu_{i,j}(x)\|_2,
\end{align*}
where the first equality replaces the finite minimum with the minimum convex combination, the second equality uses the convexity-concavity and compactness to apply a minimax theorem, and the third evaluates the inner maximum. Hence, the steepest ascent direction is given by the smallest convex combination of gradients of currently (numerically) tight $\mu_{i,j}(x)$ functions. This can be computed quickly as a convex quadratic program. Observing $\|\sum_{i,j} \lambda_{ij} \nabla \mu_{ij}(x_k)\|_2$ being nearly zero numerically verifies local optimality. Note this procedure can also be used as a simple alternative method to the above trust-region method, repeatedly updating $x_{k+1} = x_k + 2^{-n} s_k$ with $s_k=\sum_{i,j} \lambda_{ij} \nabla \mu_{ij}(x_k)$.

\section{New Numerically Identified Improved Passages.}\label{sec:results}
We apply the above local optimization procedures to a range of classic, highly symmetric polyhedra studied by prior works, the 5 Platonic, 13 Archimedean, 13 Catalan, and 92 Johnson solids. For each non-Johnson solid, we allocated a computational budget of 48 hours. Each Johnson solid received 16 hours of computation. For shapes where a passage remains unknown in the literature, we instead allocated 180 hours. All computations were done on a desktop with an \texttt{Intel Core i7-12700} processor. Tables~\ref{tab:platonic}-\ref{tab:johnson} summarize the results of repeatedly applying iteration~\eqref{eq:subgrad} to termination from random initializations. 

We sample random initial passages $x_0$ with $\theta_p, \theta_q \sim \text{Uniform}[0, \pi]$, $\phi_p, \phi_q,\alpha \sim \text{Uniform}[0, 2\pi]$, $u, v \sim \text{Uniform}[-0.1, 0.1]$, as well as $u, v = (0, 0)$. 
At each iteration of~\eqref{eq:subgrad}, each needed gradient $\nabla \mu_{i,j}(x_k)$ was computed to machine precision using symbolic differentiation.
Our Python implementation of this procedure and the best numerical passage $x$ found for each shape are available at: \url{https://github.com/RajGosain13/RupertResults}.
As a verification of all stated numerical results, all reported values of our optimized $\mu(x)$ were computed with arbitrary precision arithmetic (using 50 digits of accuracy). Subsequently, the values stated in our tables are all rounded down to 12 digits.

\subsection{Improvements for Platonic Solids.}
Beyond the cube, exact Nieuwland constants for the remaining Platonic solids are not known. Repeatedly applying our iteration~\eqref{eq:subgrad} derived improvements of varying magnitudes to prior suboptimal bounds on the constant for all other Platonic solids. These new best-known passages, shown in Figure~\ref{fig:platonic}, provide clear analytic candidates for their optimal values, shown in Table~\ref{tab:platonic}. The best passage we identified for the tetrahedron improved the prior best bound by approximately $2\times 10^{-4}$. Upon seeing this new constant agrees closely with a simple irrational ratio, we derived the following matching explicit passage.
\begin{theorem}
    The Nieuwland constant for the tetrahedron is at least  $\frac{\sqrt{6}}{1+\sqrt{2}}$.
\end{theorem}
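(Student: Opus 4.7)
The plan is to exhibit an explicit choice of the seven passage parameters $x^* = (u^*, v^*, \theta_p^*, \phi_p^*, \alpha^*, \theta_q^*, \phi_q^*)$ at which $\mu(x^*) = \frac{\sqrt{6}}{1+\sqrt{2}}$. Since $\mu_P = \max_x \mu(x)$ by definition, producing any such passage immediately gives the desired lower bound on the Nieuwland constant, and no additional optimality certificate is needed. The high-precision numerical optimum located by iteration~\eqref{eq:subgrad} serves as the oracle: one reads off each coordinate and looks for simple closed forms (typically involving $\sqrt{2}$, $\sqrt{3}$, or small rational multiples of $\pi/4$) whose decimal values match the computed optimum to the twelve-digit accuracy reported.

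With a candidate $x^*$ in hand, the remainder is a purely algebraic verification. Fix a convenient description of the regular tetrahedron, say with vertices $(1,1,1), (1,-1,-1), (-1,1,-1), (-1,-1,1)$, and compute its four outward face normals once and for all. Applying $M_{\theta_q^*,\phi_q^*}$ produces the four vertices of the outer two-dimensional projection, from which one extracts the (at most four) edge normals $\{w_j\}$. Similarly, applying $T_{u^*,v^*}\circ R_{\alpha^*}\circ M_{\theta_p^*,\phi_p^*}$ produces the four vertices $\{v_i\}$ of the inner projection. All coordinates can be kept in exact symbolic form throughout.

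The verification then consists of checking the at most sixteen inequalities $\frac{\sqrt{6}}{1+\sqrt{2}} \cdot w_j^T v_i \leq 1$ directly, using the closed form $\mu(x^*) = \min_{i,j}\{1/(w_j^T v_i) : w_j^T v_i > 0\}$ derived in Section~\ref{sec:methods}. Each such inequality reduces to a radical identity in $\sqrt{2},\sqrt{3},\sqrt{6}$, which can be cleared to a rational inequality by rationalizing denominators. Attaining equality at one or more vertex-face pairs pins down $\mu(x^*) = \frac{\sqrt{6}}{1+\sqrt{2}}$ exactly, although only the $\geq$ direction is required for the theorem.

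The main obstacle is not conceptual but bookkeeping: inferring a clean symbolic $x^*$ from the twelve-digit numerical passage, particularly for the planar rotation $\alpha^*$ and translation $(u^*, v^*)$, which may only admit simple forms in concert with the view angles $(\theta_p^*,\phi_p^*,\theta_q^*,\phi_q^*)$ rather than individually. A useful sanity check along the way is that, as observed previously, $\mu$ is nondifferentiable at all local maximizers, so several of the sixteen vertex-face constraints should simultaneously become active at $\mu = \frac{\sqrt{6}}{1+\sqrt{2}}$; this both cross-validates the guessed $x^*$ and isolates the short list of tight constraints whose equality is the genuine content of the verification, with the remaining constraints following easily.
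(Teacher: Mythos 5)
Your proposal is correct and matches the paper's proof in essence: both exhibit an explicit witness passage and reduce the lower bound to verifying finitely many linear inequalities $\mu\, w_j^T v_i \le 1$ in exact radical arithmetic. The only difference is presentational --- the paper sidesteps the bookkeeping obstacle you flag (finding clean closed forms for the seven parameters individually) by writing down the already-transformed vertex coordinates of the two tetrahedra $T_1$ and $T_2$ directly in terms of $\sqrt{2}$, $\sqrt{3}$, and $\sqrt{6}$, rather than the parameters $(u,v,\theta_p,\phi_p,\alpha,\theta_q,\phi_q)$ themselves.
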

\begin{proof}
Consider the following pair of (differently rotated and translated) tetrahedra, each with all edge lengths $\sqrt{3}$, defined by a convex hull of their extreme points:
\begin{align*}
    T_1 = \mathtt{convexHull}\{ &\left(0,\ 1,\ -\sqrt{1/8}\right),\quad  \left(\sqrt{3/4},\ -1/2,\ -\sqrt{1/8}\right),\\
    &\left(-\sqrt{3/4},\ -1/2,\ -\sqrt{1/8}\right),\quad  \left(0,\ 0,\ \sqrt{9/8}\right)\} \ , \\
    T_2 = \mathtt{convexHull}\{ & \left((2-\sqrt{2})/4,\  (5\sqrt{6} - 2\sqrt{3})/12,\  (2+\sqrt{2})/4\right),\\
    & \left(-(2-\sqrt{2})/4,\  (5\sqrt{6} - 2\sqrt{3})/12,\  -(2+\sqrt{2})/4\right),\\
    &\left((2+\sqrt{2})/4,\ -(2\sqrt{3}+\sqrt{6})/12,\ -(2-\sqrt{2})/4\right),\\
    & \left(-(2+\sqrt{2})/4,\ -(2\sqrt{3}+\sqrt{6})/12,\ (2-\sqrt{2})/4\right)\} \ . 
\end{align*}
Simple algebra can verify that when projected into the $x,y$-plane, $\mu=\frac{\sqrt{6}}{1+\sqrt{2}}$ times each extreme point of $T_2$ lies in the projection of $T_1$ into the $x,y$-plane. 
\end{proof}
We conjecture this is the tetrahedron's exact Nieuwland constant.
\begin{conjecture}\label{conj:tetrahedron}
    The Nieuwland constant for the tetrahedron is equal to  $\frac{\sqrt{6}}{1+\sqrt{2}}$.
\end{conjecture}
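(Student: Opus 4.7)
The lower bound $\mu_{\text{tet}} \geq \frac{\sqrt{6}}{1+\sqrt{2}}$ is already provided by the preceding theorem via an explicit passage. The hard direction, and the real content of Conjecture~\ref{conj:tetrahedron}, is the matching upper bound $\mu_{\text{tet}} \leq \frac{\sqrt{6}}{1+\sqrt{2}}$, i.e.\ no passage achieves a strictly larger rescaling. My plan pursues this upper bound by first reducing dimension using the tetrahedron's symmetry, then reducing to a planar polygon-containment question, and finally bounding all critical points of the resulting piecewise algebraic optimization.

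The first reduction uses the full tetrahedral symmetry group, of order $24$. Replacing either the inner or outer tetrahedron $T$ by any symmetric image of itself leaves $\mu(x)$ invariant, so we may assume both rotation parameters $(\theta_p,\phi_p)$ and $(\theta_q,\phi_q)$ lie in a fundamental domain on the sphere of projection directions. Once these are fixed, each image $M_{\theta_p,\phi_p}(T)$ and $M_{\theta_q,\phi_q}(T)$ is either a triangle or a convex quadrilateral, determined by which open spherical cone the projection direction lies in. The remaining problem is planar: for each admissible $(\theta_p,\phi_p,\theta_q,\phi_q)$, compute
\[
    \mu^*(\theta_p,\phi_p,\theta_q,\phi_q) = \max_{\alpha,u,v}\{\mu : \mu \cdot (T_{u,v}\circ R_\alpha \circ M_{\theta_p,\phi_p})(T) \subset M_{\theta_q,\phi_q}(T)\},
\]
and show that $\mu^* \leq \frac{\sqrt{6}}{1+\sqrt{2}}$ everywhere, with equality attained only at the configuration realized in the theorem. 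Classical results on polygon-in-polygon containment imply that, for each combinatorial ``touching pattern'' (which vertices of the scaled inner polygon meet which edges of the outer at optimum), $\mu^*$ is an explicit algebraic function of the four orientation parameters, and there are only finitely many such types.

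On each combinatorial piece, I would derive polynomial equations for the critical points of $\mu^*$ over the reduced 4-dimensional orientation domain via Lagrange multipliers, and then verify using resultants, Gr\"{o}bner bases, or cylindrical algebraic decomposition that no critical value exceeds $\frac{\sqrt{6}}{1+\sqrt{2}}$. One must additionally handle the codimension-one boundaries where the touching pattern switches and the codimension-one strata where a projection degenerates from a quadrilateral to a triangle; continuity of $\mu^*$ reduces these to lower-dimensional instances of the same analysis.

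The main obstacle is the critical-point enumeration. Because $\mu(x)$ is nonsmooth, nonconvex, and defined piecewise over many semialgebraic pieces, a complete symbolic certification involves a polynomial system that is not obviously tractable by hand; a computer-algebra attack, or alternatively a Positivstellensatz / sum-of-squares certificate directly proving the polynomial inequality $\mu^*(\theta_p,\phi_p,\theta_q,\phi_q) \leq \frac{\sqrt{6}}{1+\sqrt{2}}$, appears to be the most promising route. Until such a certificate is produced, the extensive random-initialization local search of Section~\ref{sec:results}, which never exceeded $\frac{\sqrt{6}}{1+\sqrt{2}}$ across thousands of trials, provides only empirical support for the upper bound, which is precisely why the equality is stated as a conjecture rather than a theorem.
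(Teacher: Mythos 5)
The statement you are addressing is stated in the paper as a \emph{conjecture}, and the paper offers no proof of it. What the paper does prove, in the theorem immediately preceding the conjecture, is only the lower bound $\mu_{\mathrm{tet}} \geq \frac{\sqrt{6}}{1+\sqrt{2}}$, via an explicit pair of rotated and translated tetrahedra whose projections realize that rescaling; the matching upper bound is supported only by the high-precision numerical search of Section~\ref{sec:results}. Your proposal correctly identifies this division of labor: you take the lower bound as settled and concentrate on the upper bound, which is indeed the entire open content of the conjecture.

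That said, your proposal is a research program rather than a proof, and you say so yourself. The concrete gap is the final certification step: after reducing by the order-$24$ symmetry group, stratifying the orientation space by whether each projection is a triangle or a quadrilateral, and decomposing by combinatorial touching pattern, you still need to verify that no critical value of the piecewise algebraic function $\mu^*(\theta_p,\phi_p,\theta_q,\phi_q)$ exceeds $\frac{\sqrt{6}}{1+\sqrt{2}}$. That verification --- whether by Gr\"obner bases, cylindrical algebraic decomposition, or a sum-of-squares certificate --- is not carried out, and there is no evidence offered that the resulting polynomial systems are tractable. Until it is carried out, the upper bound remains exactly as open as the paper leaves it. The outline itself is reasonable and is a natural route one might take (the symmetry reduction is legitimate since both proper and improper symmetries of the regular tetrahedron fix it setwise, and its planar projections are indeed triangles or convex quadrilaterals), but as submitted it does not prove the statement; it merely restates, in more structured form, why the statement is a conjecture.
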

Note the conjectured optimal value $\rho$ for the icosahedron and dodecahedron in Table~\ref{tab:platonic} is due to Steininger and Yurkevich \cite{Steininger2023}.
They further conjectured that if a Platonic solid is Rupert, then so is its dual with exactly the same constant.
\begin{conjecture}[Steininger and Yurkevich~\cite{Steininger2023}] \label{conj:dualConstants}
The Nieuwland constant for each Platonic solid is equal to the constant for its dual Platonic solid. 
\end{conjecture}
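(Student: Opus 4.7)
Since the tetrahedron is self-dual, the only substantive cases are the cube/octahedron and dodecahedron/icosahedron pairs, both consisting of centrally symmetric polyhedra. The plan is to combine polar duality with the shared symmetry groups of each dual pair. When appropriately scaled and centered at the origin, the combinatorial dual of each centrally symmetric Platonic solid coincides with its polar dual polytope, so I will use $P^*$ for both.

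The first ingredient is a duality dictionary between passages of $P$ and ``section-passages'' of $P^*$. For any 2D subspace $H\subset\mathbb{R}^3$ through the origin and orthogonal projection $\pi_H$, one has the classical identity $(\pi_H K)^\circ = H\cap K^\circ$, where the polar on the left is taken within $H$ and the polar on the right within $\mathbb{R}^3$. Using that rotations commute with polar duality, and centering translations at the centroid, a passage $\mu\cdot\pi_H(R_q P)\subset\pi_H(R_p P)$ of $P$ dualizes to the section inclusion $H\cap R_p P^*\subset\mu\cdot(H\cap R_q P^*)$. Thus every passage of $P$ with rescaling $\mu$ produces a configuration of rotated copies of $P^*$ whose planar cross-sections nest with the same ratio.

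The second and nontrivial ingredient is to convert this section-nesting into a genuine Rupert passage of $P^*$. For the cube/octahedron pair I would exploit the geometric coincidence that, up to uniform scaling, the shadow of the octahedron along a $(1,1,1)$-axis is the regular hexagon that also arises as the cross-section of the cube perpendicular to that axis; analogous coincidences hold along the shared 4-fold and 2-fold rotation axes. For the dodecahedron/icosahedron pair the same plan applies along the shared 5-fold, 3-fold, and 2-fold axes. Combining this with the first ingredient yields $\mu_{P^*}\geq \mu_P$ for each pair; exchanging the roles of $P$ and $P^*$ then gives equality.

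The main obstacle is precisely this shadow-to-section conversion: polar duality formally turns projections of $P$ into cross-sections of $P^*$, not projections, so no purely formal argument can suffice. What would be needed is a symmetry-reduction result asserting that the supremum defining $\mu_P$ is attained (or at least approached) at passages compatible with the common rotation group of $P$ and $P^*$, at which configurations the shadow of one polyhedron coincides up to scale with the cross-section of the other. The conjectured Nieuwland constant for the dodecahedron/icosahedron is not known in closed form, so the argument must remain comparative throughout. Moreover, establishing such a symmetry-reduction claim rigorously for the nonsmooth objective $\mu(x)$ (which, as noted in Section~\ref{sec:methods}, is never differentiable at its maximizers) is itself delicate and is likely the reason this conjecture remains open.
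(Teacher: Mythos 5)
This statement is a conjecture, not a theorem: the paper does not prove it and offers only numerical evidence for it (the computed constants for the cube/octahedron, icosahedron/dodecahedron, and tetrahedron/tetrahedron pairs agree to at least 12 digits in Table~\ref{tab:platonic}). So there is no ``paper proof'' to compare against, and your proposal --- which you yourself concede is incomplete --- should not be presented as a proof of the statement. To your credit, you correctly identify the central obstruction: polar duality converts \emph{shadows} of $P$ into \emph{cross-sections} of $P^*$, and no formal argument converts a nested pair of planar sections of $P^*$ back into a Rupert passage (a nested pair of planar shadows) of $P^*$. That is exactly why the reduction does not close.

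Beyond the gap you acknowledge, there are two further problems worth naming. First, polarity is not translation-equivariant: the passage in the Steininger--Yurkevich parametrization includes a planar translation $T_{u,v}$, and optimal passages generally have $(u,v)\neq(0,0)$, so ``centering translations at the centroid'' discards data that cannot be recovered after dualizing; the identity $(\pi_H K)^\circ = H\cap K^\circ$ only applies to origin-centered bodies. Second, your dualized inclusion is written with the wrong scaling: from $\mu\cdot \pi_H(R_q P)\subset \pi_H(R_p P)$, taking polars within $H$ gives $\mu\cdot\bigl(H\cap R_p P^*\bigr)\subset H\cap R_q P^*$, not $H\cap R_p P^*\subset\mu\cdot\bigl(H\cap R_q P^*\bigr)$; for $\mu>1$ these are genuinely different statements. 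Finally, any successful argument must exploit something special about Platonic solids: as the paper notes in Section~\ref{sec:results}, the analogous duality of constants visibly \emph{fails} for Archimedean--Catalan pairs (e.g., the truncated tetrahedron versus the triakis tetrahedron), so a purely formal polarity argument of the kind you sketch would prove too much.
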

Our higher precision bounds provide further support for this conjecture. From Table~\ref{tab:platonic}, the constants for the cube and octahedron, icosahedron and dodecahedron, and the tetrahedron and itself all agree with their dual's value to at least 12 digits.

\begin{table}[t]
\centering
\begin{tabular}{|c|c|c|c|c|}
\hline
Platonic Solid & Hours & Prior Best $\mu$ & Best $\mu$ Seen & Conjectured Optimal \\
\hline
Tetrahedron & 48 & 1.014473\cite{steininger2021} & {\bf 1.01461187233467} & $\frac{\sqrt{6}}{1+\sqrt{2}}$ \\
Cube & 0 & $\frac{3\sqrt{2}}{4}$\cite{schreck1950} & - & $\frac{3\sqrt{2}}{4}$ \\
Octahedron & 48 & 1.060660\cite{steininger2021} & {\it 1.06066017177981} & $\frac{3\sqrt{2}}{4}$ \\
Icosahedron & 48 & 1.010805\cite{steininger2021} & {\bf 1.01082306075264} & $\rho$ \\
Dodecahedron & 48 & 1.010818\cite{steininger2021} & {\bf 1.01082306075208} & $\rho$ \\
\hline
\end{tabular}
\caption{Results of repeated trials optimizing passages for each Platonic solid via~\eqref{eq:subgrad} for forty-eight hours each. Improvements on known lower bounds for their Nieuwland constant are denoted by bold and accuracy improvements in italics. Here $\rho$ denotes the smallest positive root of $2025x^8 - 11970x^6 + 17009x^4 - 9000x^2 + 2000$.}
\label{tab:platonic}
\end{table}

        \begin{figure}
    \centering
    \begin{subfigure}[b]{0.19\textwidth}
        \centering
        \includegraphics[width=\linewidth, trim={2.1cm 2.1cm 2.1cm 2.1cm}, clip]{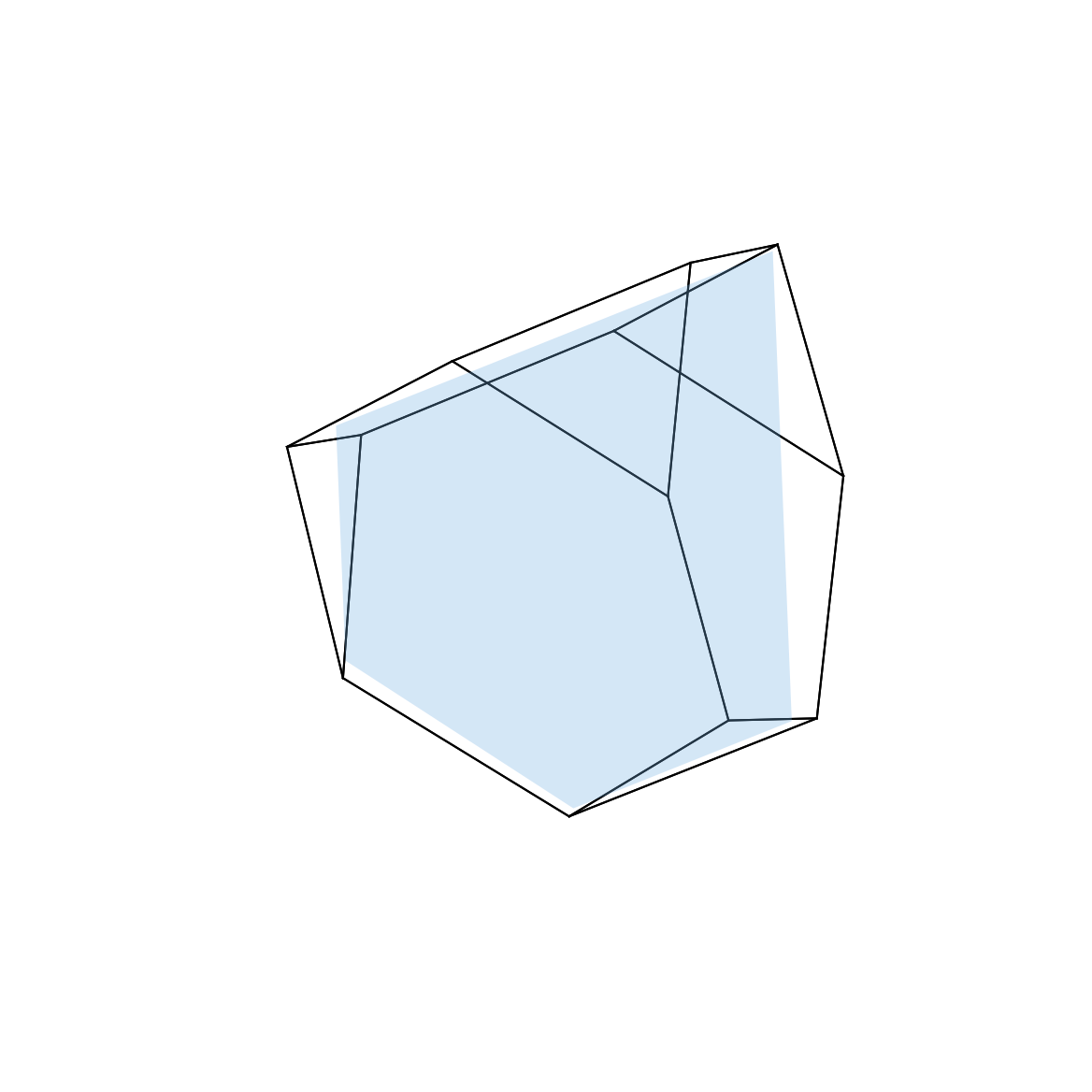}
        \caption{\centering Truncated Tetrahedron}
    \end{subfigure}
    \hfill
    \begin{subfigure}[b]{0.19\textwidth}
        \centering
        \includegraphics[width=\linewidth, trim={2.1cm 2.1cm 2.1cm 2.1cm}, clip]{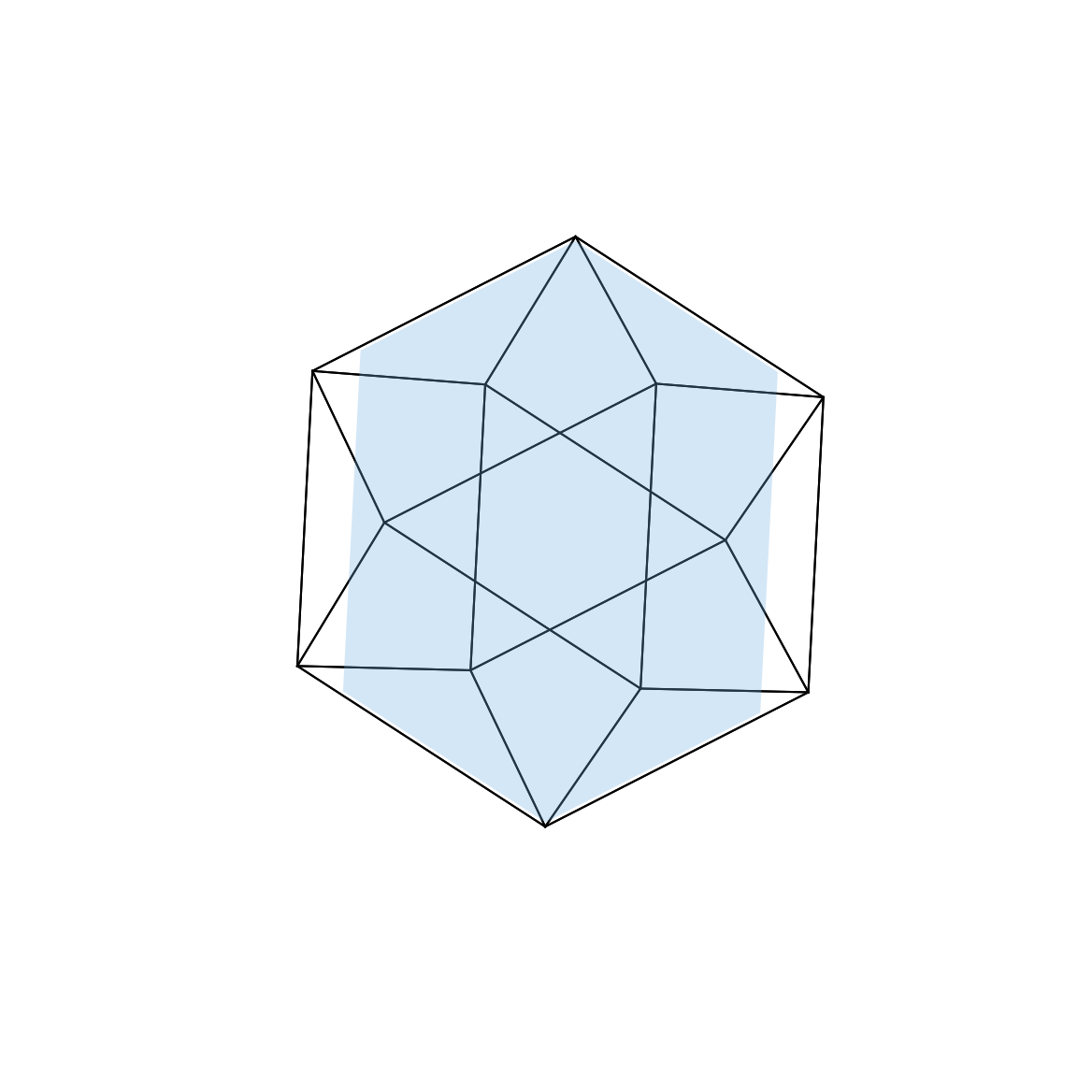}
        \caption{\centering Cuboctahedron}
    \end{subfigure}
    \hfill
    \begin{subfigure}[b]{0.19\textwidth}
        \centering
        \includegraphics[width=\linewidth, trim={2.1cm 2.1cm 2.1cm 2.1cm}, clip]{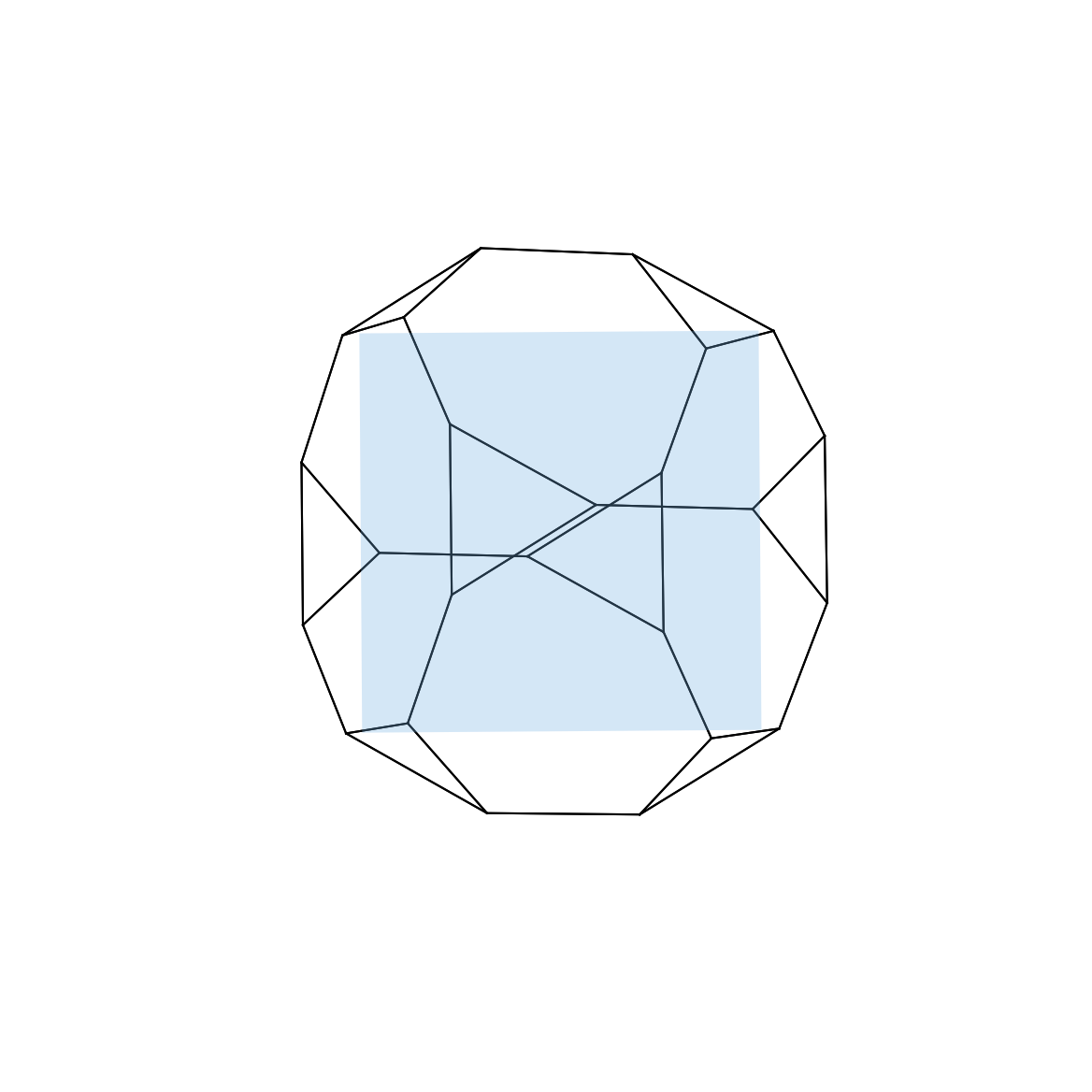}
        \caption{\centering Truncated Cube}
    \end{subfigure}
    \vskip\baselineskip\vskip-9pt
    \hfill
    \begin{subfigure}[b]{0.19\textwidth}
        \centering
        \includegraphics[width=\linewidth, trim={2.1cm 2.1cm 2.1cm 2.1cm}, clip]{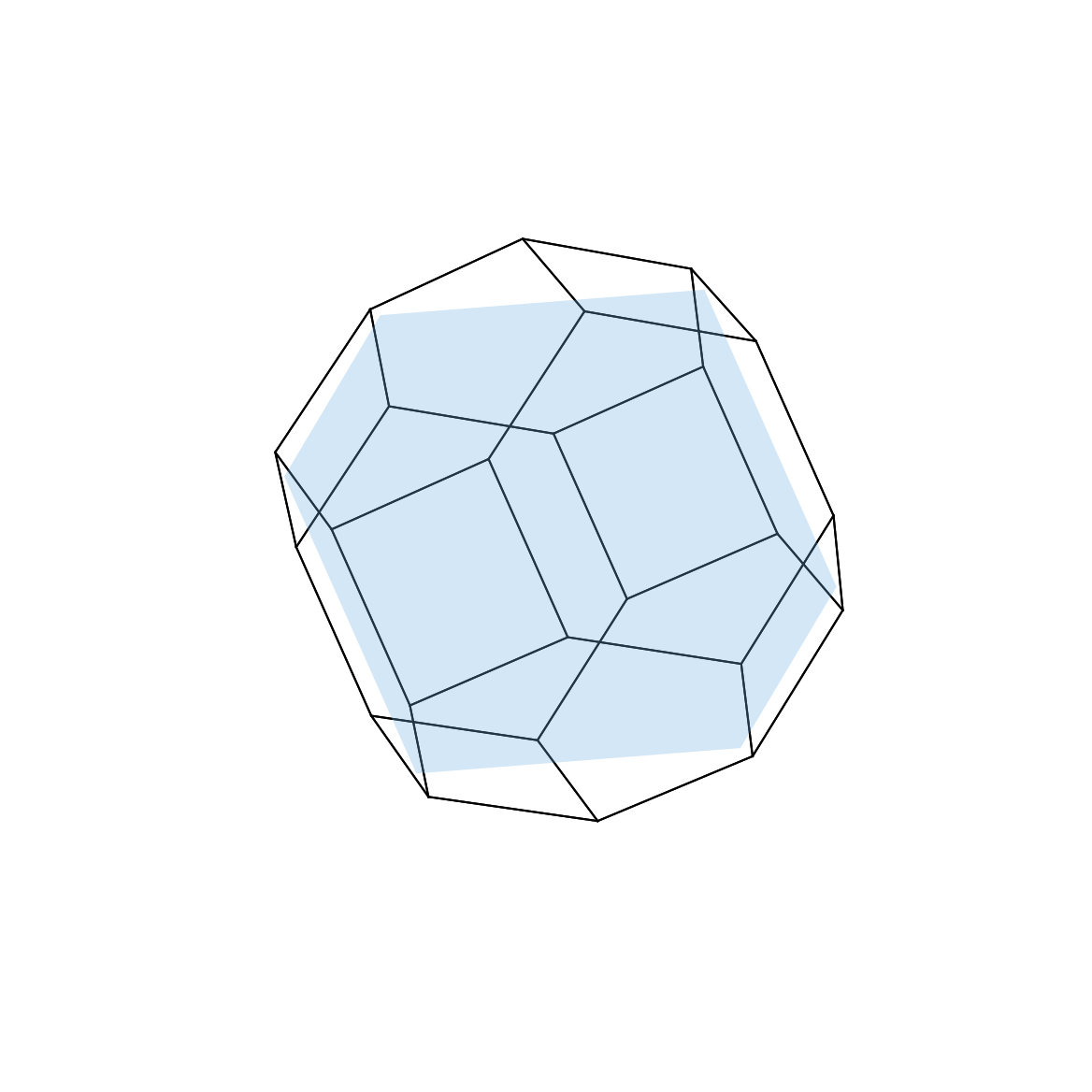}
        \caption{\centering Truncated Octahedron}
    \end{subfigure}
    \hfill
    \begin{subfigure}[b]{0.19\textwidth}
        \centering
        \includegraphics[width=\linewidth, trim={2.1cm 2.1cm 2.1cm 2.1cm}, clip]{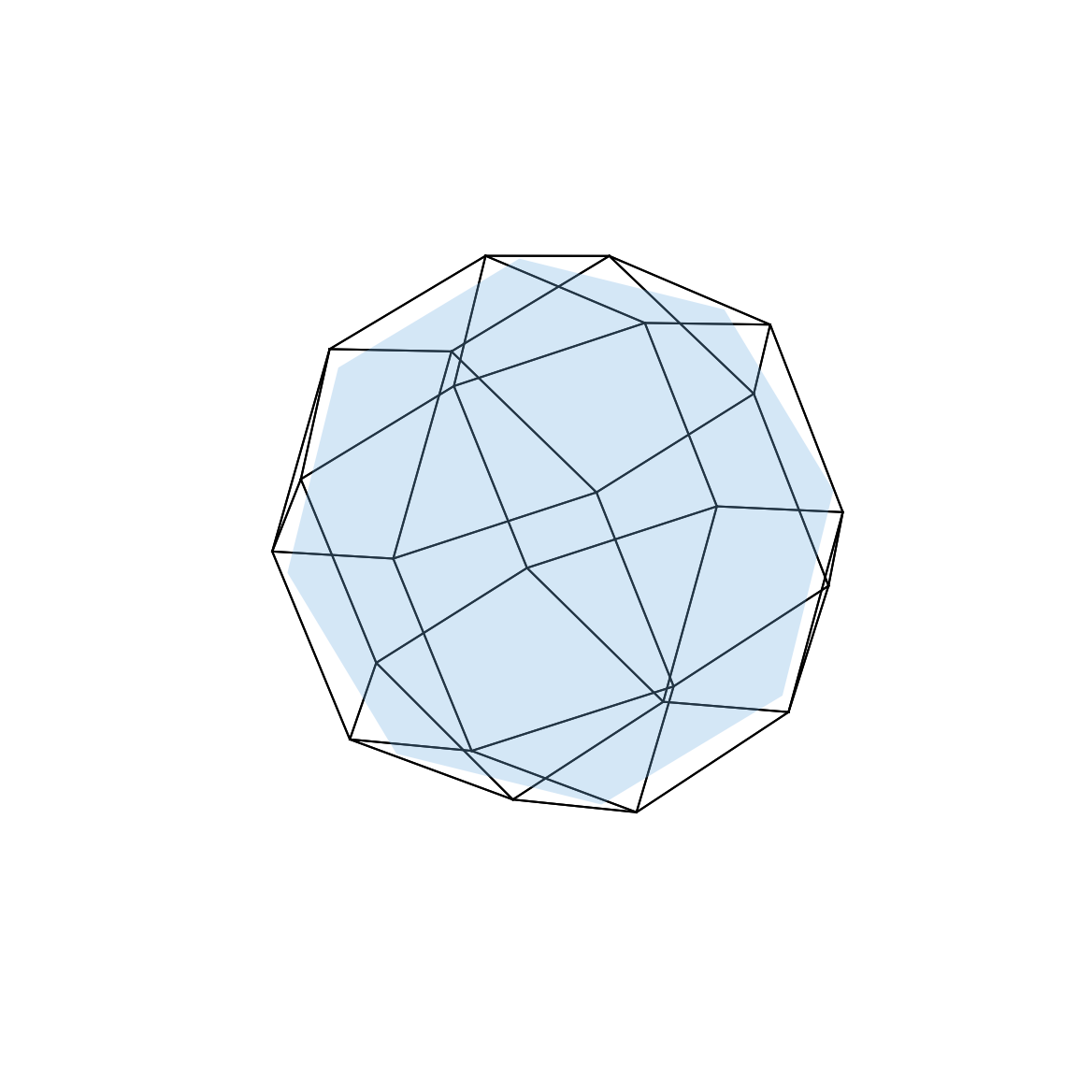}
        \caption{\centering Rhombicuboctahedron}
    \end{subfigure}
    \hfill \ 
    \vskip\baselineskip\vskip-9pt
    \begin{subfigure}[b]{0.19\textwidth}
        \centering
        \includegraphics[width=\linewidth, trim={2.1cm 2.1cm 2.1cm 2.1cm}, clip]{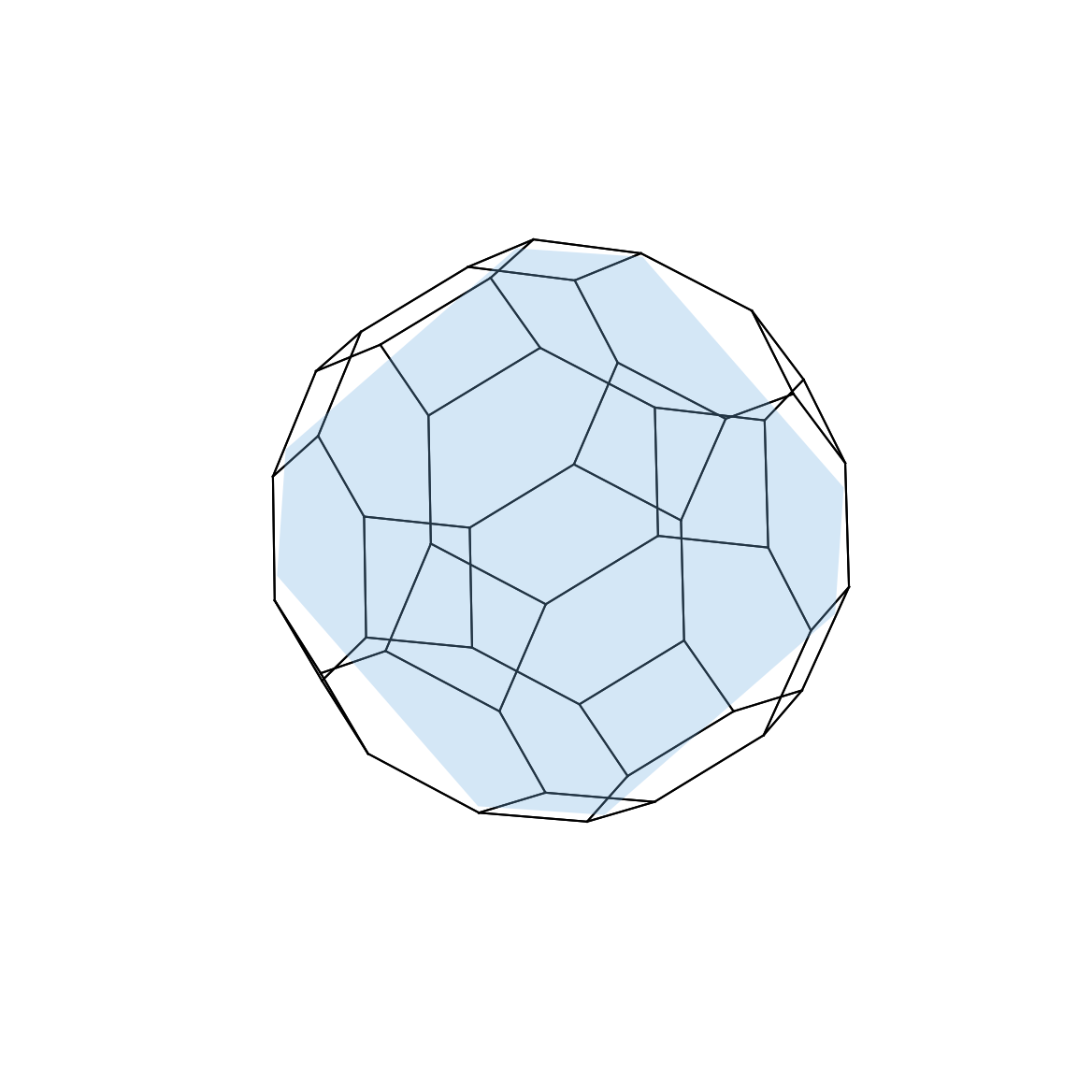}
        \caption{\centering Truncated Cuboctahedron}
    \end{subfigure}
    \hfill
    \begin{subfigure}[b]{0.19\textwidth}
        \centering
        \includegraphics[width=\linewidth, trim={2.1cm 2.1cm 2.1cm 2.1cm}, clip]{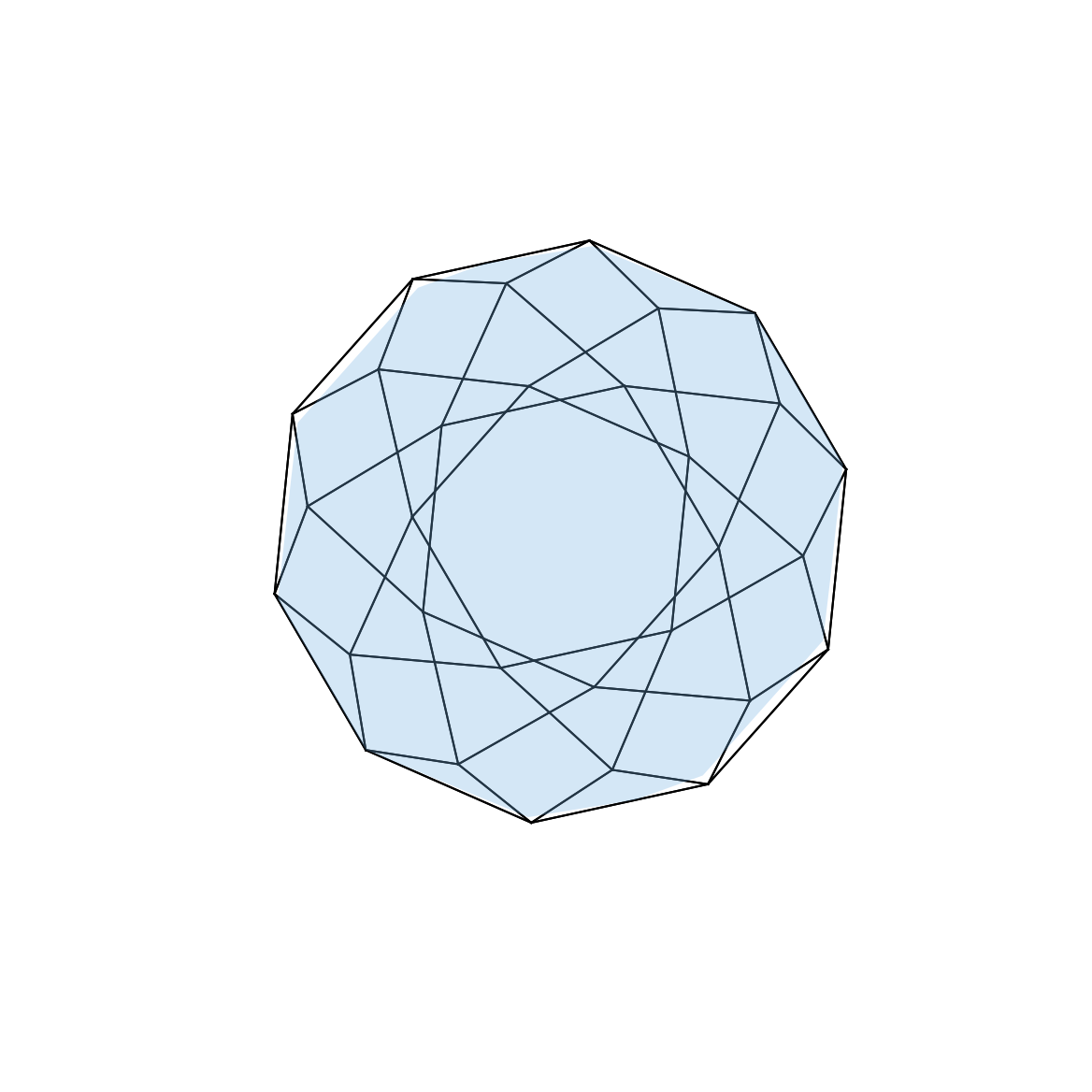}
        \caption{\centering Icosidodecahedron}
    \end{subfigure}
    \hfill
    \begin{subfigure}[b]{0.19\textwidth}
        \centering
        \includegraphics[width=\linewidth, trim={2.1cm 2.1cm 2.1cm 2.1cm}, clip]{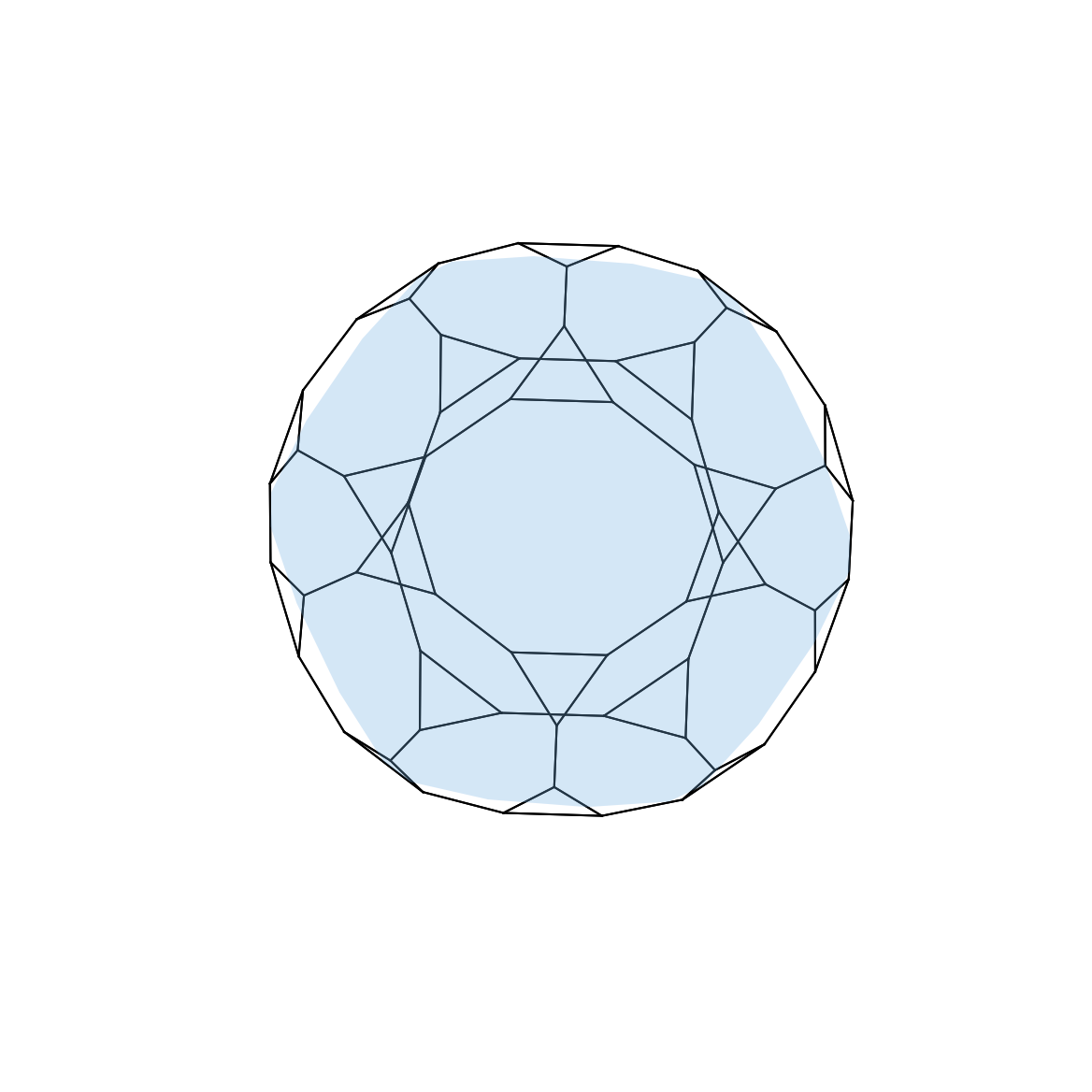}
        \caption{\centering Truncated Dodecahedron}
    \end{subfigure}
    \vskip\baselineskip\vskip-9pt
    \hfill 
    \begin{subfigure}[b]{0.19\textwidth}
        \centering
        \includegraphics[width=\linewidth, trim={2.1cm 2.1cm 2.1cm 2.1cm}, clip]{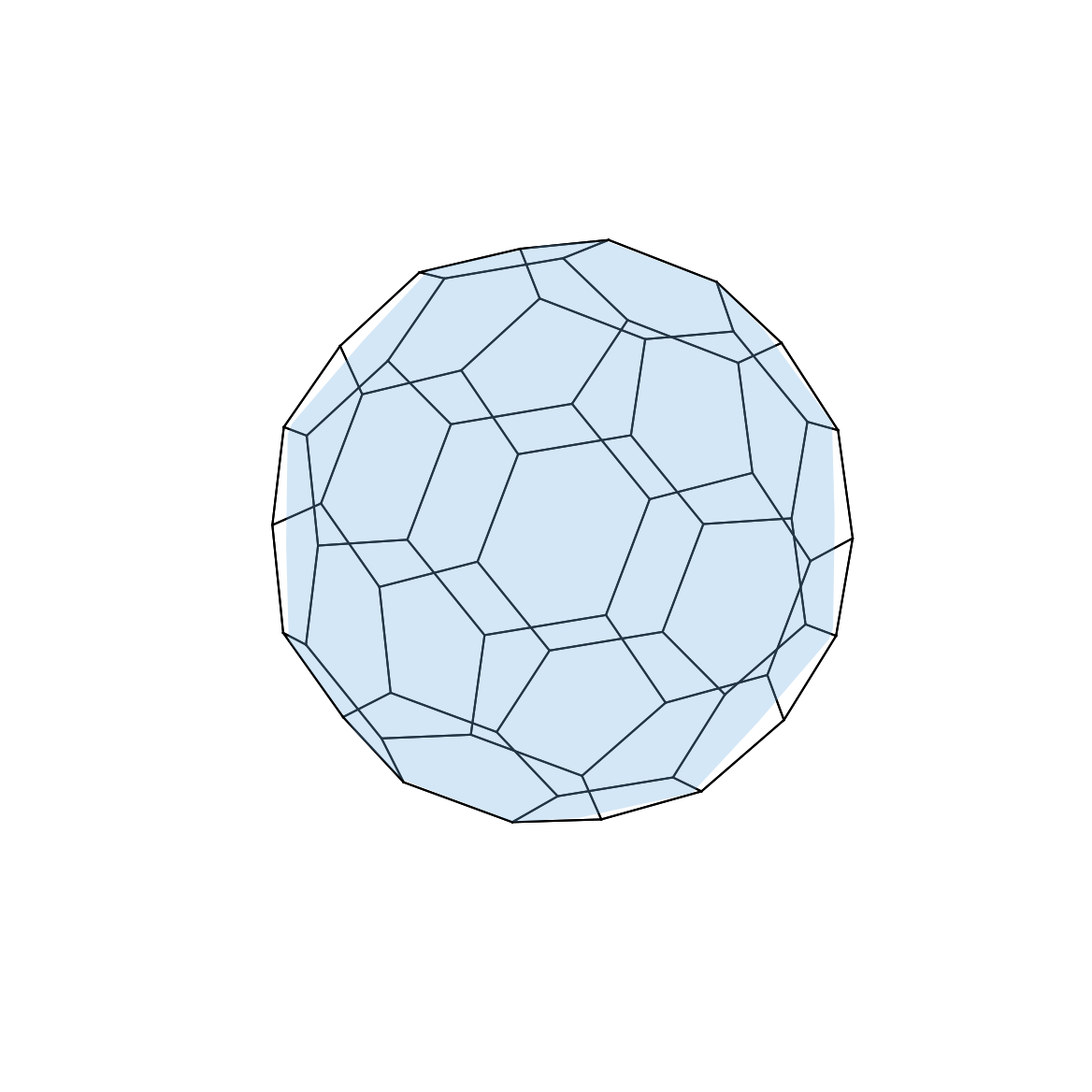}
        \caption{\centering Truncated Icosahedron}
    \end{subfigure}
    \hfill
    \begin{subfigure}[b]{0.19\textwidth}
        \centering
        \includegraphics[width=\linewidth, trim={2.1cm 2.1cm 2.1cm 2.1cm}, clip]{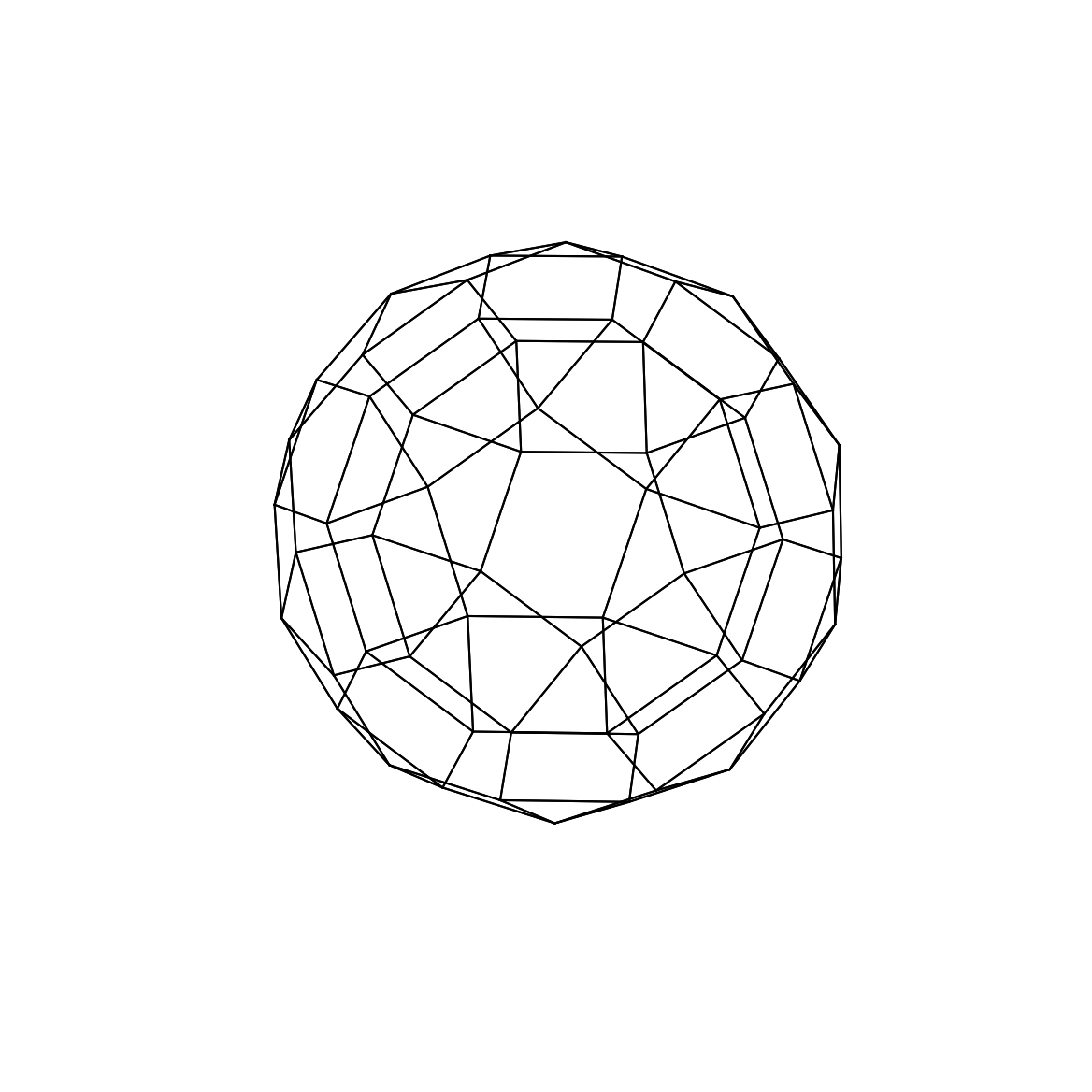}
        \caption{\centering Rhombicosidodecahedron}
    \end{subfigure}
    \hfill \ 
    \vskip\baselineskip\vskip-9pt
    \begin{subfigure}[b]{0.19\textwidth}
        \centering
        \includegraphics[width=\linewidth, trim={2.1cm 2.1cm 2.1cm 2.1cm}, clip]{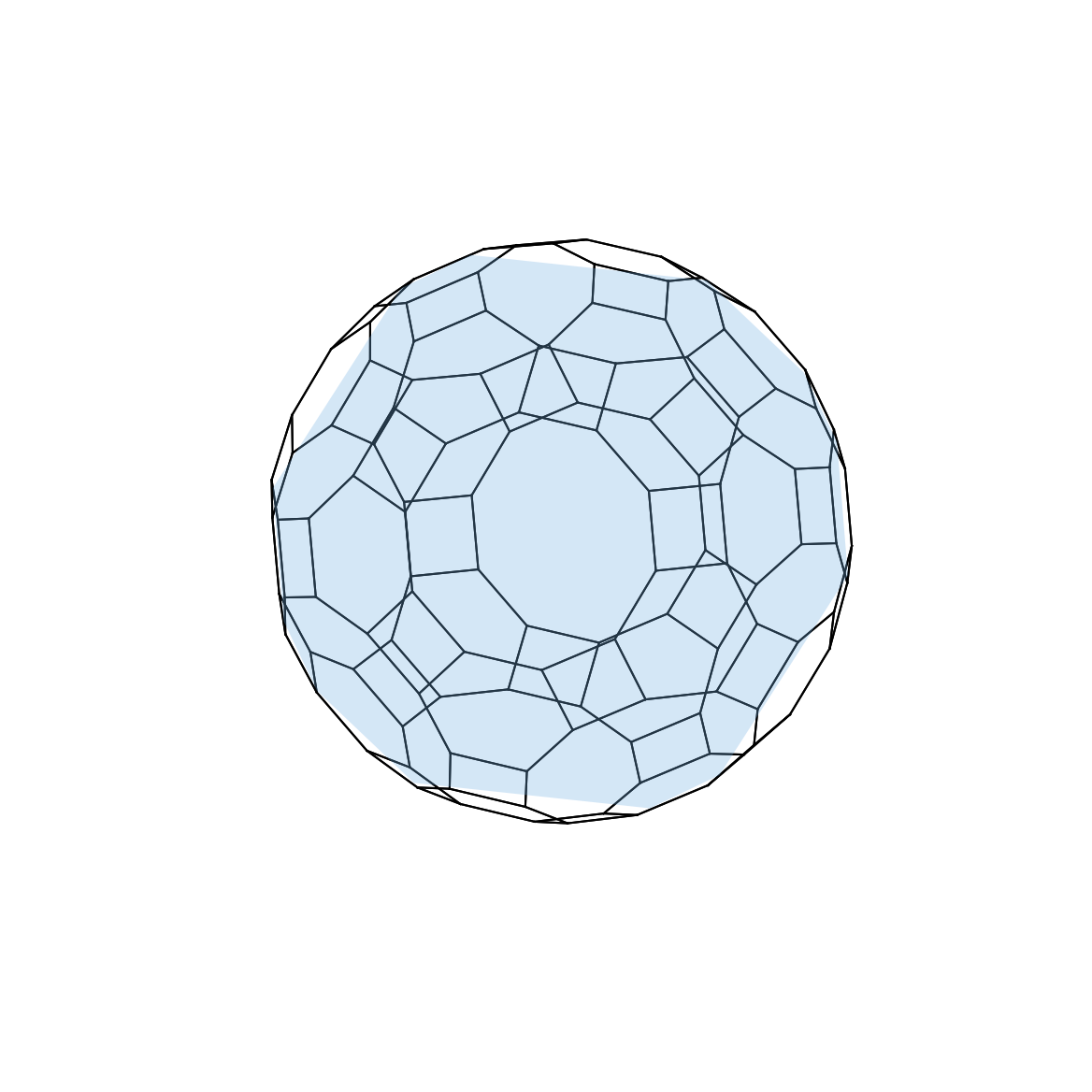}
        \caption{\centering Truncated Icosidodecahedron}
    \end{subfigure}
    \hfill
    \begin{subfigure}[b]{0.19\textwidth}
        \centering
        \includegraphics[width=\linewidth, trim={2.1cm 2.1cm 2.1cm 2.1cm}, clip]{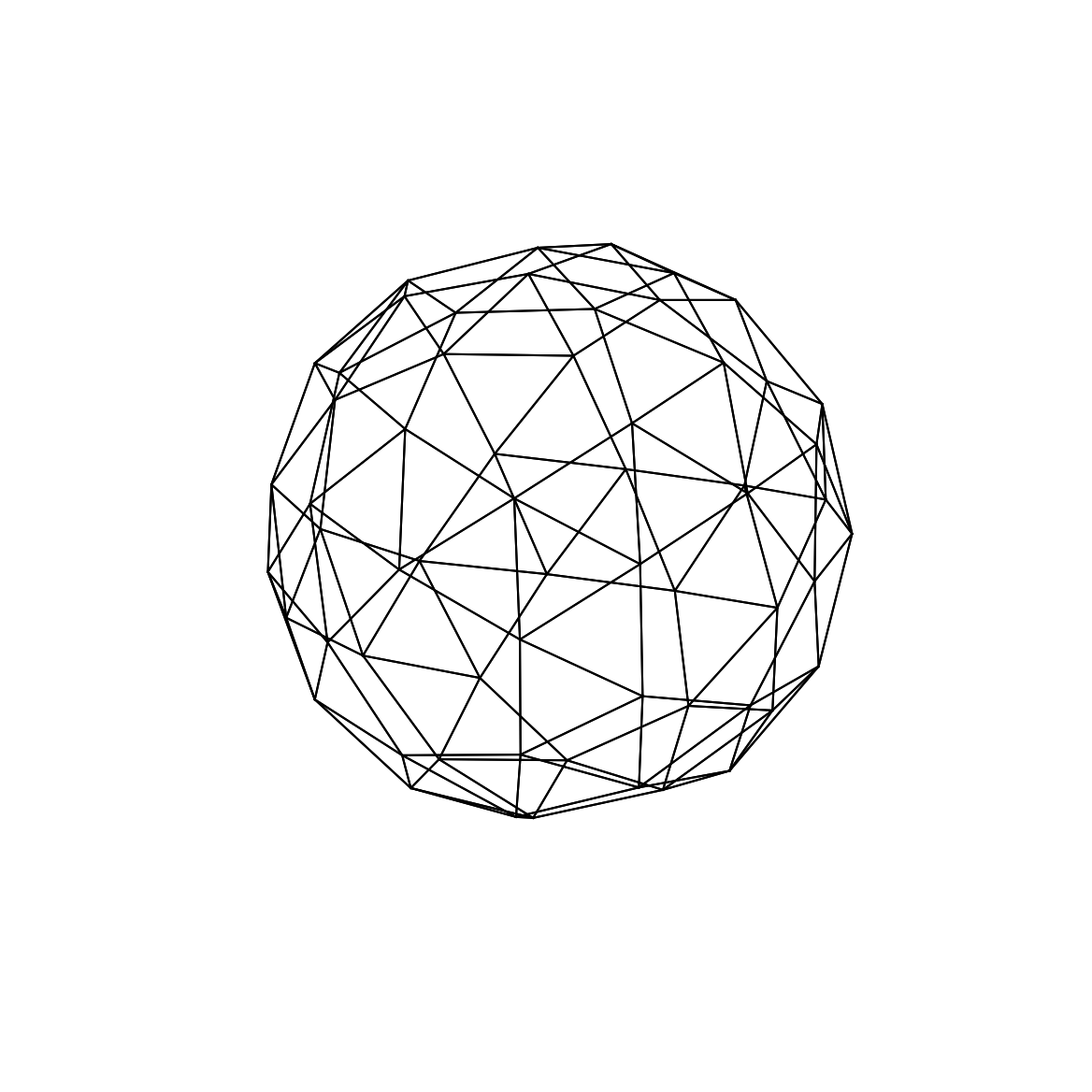}
        \caption{\centering Snub Dodecahedron}
    \end{subfigure}
    \hfill
    \begin{subfigure}[b]{0.19\textwidth}
        \centering
        \includegraphics[width=\linewidth, trim={2.1cm 2.1cm 2.1cm 2.1cm}, clip]{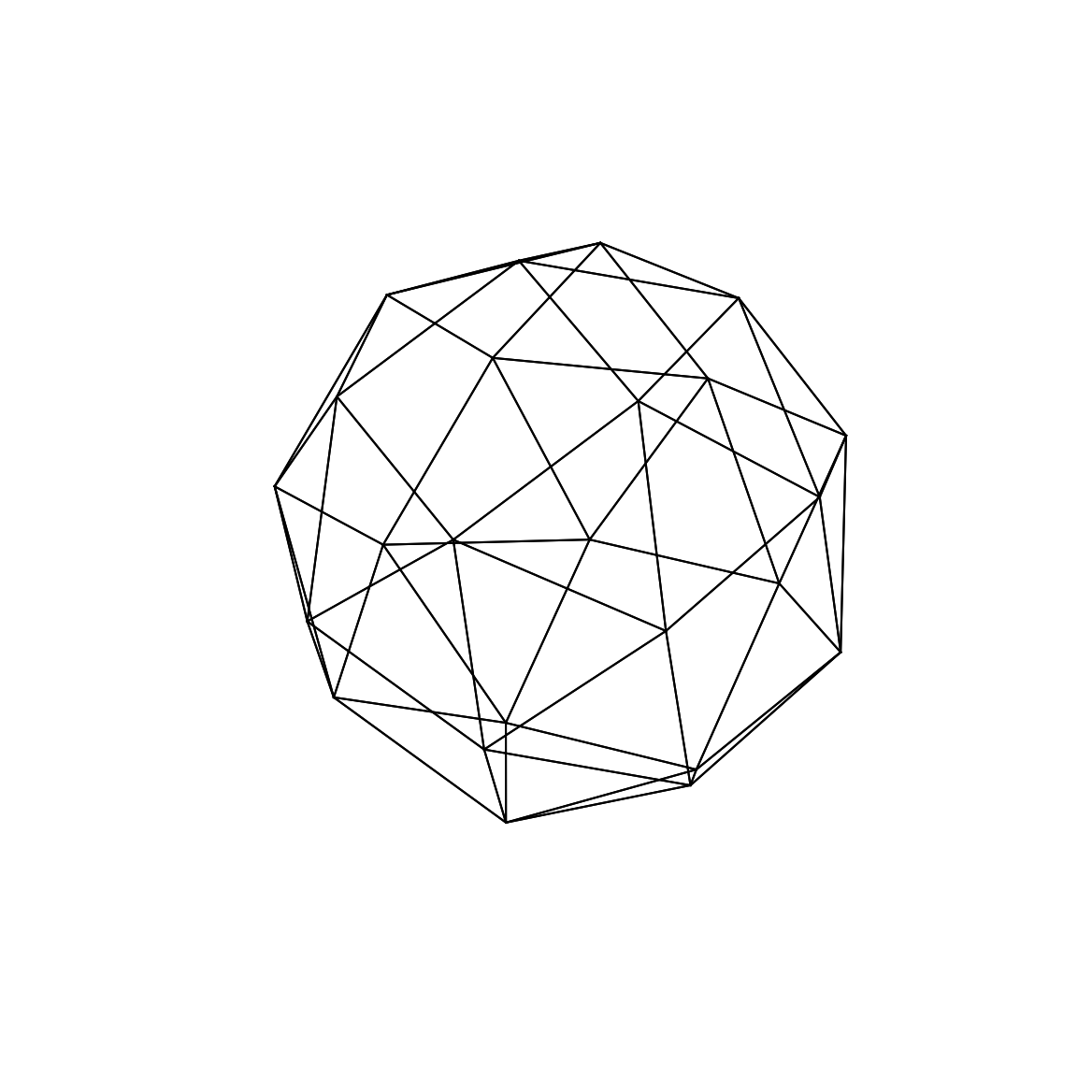}
        \caption{\centering Snub Cube}
    \end{subfigure}
   \caption{(New) Best-known, highly optimized passages for the thirteen Archimedean solids and bounds on their Nieuwland constants. See Table~\ref{tab:archimedean} for exact numerically computed constants.}
    \label{fig:archimedean}
\end{figure}

\subsection{Improvements for Archimedean Solids.}
Applying our method to the Archimedean solids improved the best-known lower bounds for five solids previously known to be Rupert. Our higher precision bounds could be used to motivate conjectures on the Nieuwland constants for the Archimedean solids. For example, the best cuboctahedron passage found matches exactly the best tetrahedron passage we found, indicating its Nieuwland constant may also be $\frac{\sqrt{6}}{1+\sqrt{2}}$. See Table~\ref{tab:archimedean}. Applied to rhombicosidodecahedron, snub dodecahedron, and snub cube for 180 hours each, our method was unable to find passages that would prove Rupertness. This provides further support to the (negative) conjecture of Steininger and Yurkevich.
\begin{conjecture}[Steininger and  Yurkevich~\cite{steininger2021}]
The rhombicosidodecahedron, snub dodecahedron,  and snub cube are not Rupert. 
\end{conjecture}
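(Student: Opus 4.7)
The plan is to establish a certified upper bound $\mu_P \leq 1$ for each of the three polyhedra by recasting the Nieuwland computation as a global \emph{maximization} problem and applying interval-arithmetic branch-and-bound. Unlike the paper's local subgradient search, which only rules out values of $\mu$ above the passages it actually finds, a certified non-Rupertness argument must cover the entire seven-dimensional parameter domain of passages $x=(u,v,\theta_p,\phi_p,\alpha,\theta_q,\phi_q)$ and exhibit, for every $x$, some vertex-face pair $(i,j)$ for which $w_j^T v_i \geq 1$.

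First I would shrink the domain dramatically. Each of the three solids has a large rotation symmetry group (chiral octahedral for the snub cube, chiral icosahedral for the snub dodecahedron, and full icosahedral for the rhombicosidodecahedron), so both $M_{\theta_p,\phi_p}(P)$ and $M_{\theta_q,\phi_q}(P)$ need only be considered up to a fundamental domain of the corresponding action on the sphere. The in-plane rotation $\alpha$ and translation $(u,v)$ can be eliminated altogether by the inner optimization used in Fredriksson's reformulation~\cite{fredriksson2024}: for any fixed projection directions $(\theta_p,\phi_p,\theta_q,\phi_q)$ the best achievable scaling is an explicit function $f(\theta_p,\phi_p,\theta_q,\phi_q)$, so the certificate only needs to cover a reduced four-dimensional domain modulo symmetry.

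Next I would run a branch-and-bound on this reduced domain. On each box $B$, use interval arithmetic on the trigonometric entries of $M_{\theta_p,\phi_p}$ and $M_{\theta_q,\phi_q}$ to enclose each $w_j^T v_i$. If on $B$ there is some pair $(i,j)$ whose enclosure lies entirely in $[1,\infty)$, then $\mu_{i,j}(x) \leq 1$ throughout $B$ and hence $\mu(x)\leq 1$ on $B$. Boxes for which no such pair is found are subdivided and retried, guided by the near-optimal passages returned by iteration~\eqref{eq:subgrad} so that refinement concentrates where $\mu$ comes closest to $1$. An auxiliary ingredient is handling chirality for the two snub solids: the parameterization $R_\alpha\circ M_{\theta_p,\phi_p}$ only captures orientation-preserving congruences, so one must separately treat the case in which the inner copy is a mirror image (replacing $M_{\theta_p,\phi_p}$ by a reflected version).

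The hardest part, and the reason the conjecture is still open, is that our experiments strongly suggest the true suprema of $\mu$ for these solids lie \emph{very close to} (but below) $1$. Consequently the boxes near the worst orientations must shrink to the scale of the unknown gap $1-\mu_P$, and the number of naive interval boxes required to cover a 4D domain to that resolution is astronomical. Overcoming this will likely require replacing first-order interval bounds with tighter certificates in the critical regions: for instance, sum-of-squares proofs on small polynomial subsystems (after applying a tangent-half-angle substitution to the trigonometrics to rationalize everything), combined with symmetry-aware pruning that identifies boxes related by a group element so each orbit is certified once. Absent a clever analytic simplification, the correctness of the conjecture will stand or fall on whether $1-\mu_P$ is large enough to be certified by such a hybrid interval/SOS procedure within feasible compute.
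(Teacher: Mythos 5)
The statement you were asked to prove is a \emph{conjecture} in the paper (attributed to Steininger and Yurkevich), and the paper does not prove it: it only reports that 180 hours of high-precision local optimization per solid failed to produce any passage with $\mu>1$, which is offered as supporting evidence, not as a proof. So there is no paper proof to compare against; any genuine argument here would be a new result.

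Your proposal is a reasonable research program for certifying non-Rupertness, but it is not a proof, and you essentially concede this in your final paragraph: the whole construction hinges on covering the parameter space with boxes whose required resolution is set by the unknown gap $1-\mu_P$, and you supply no bound on that gap, no termination guarantee for the branch-and-bound, and no actual certificate for any of the three solids. Two further technical points would need care even to begin executing the plan. First, you cannot simultaneously (i) reduce to a four-dimensional domain by eliminating $(u,v,\alpha)$ through the inner maximization of Fredriksson's reformulation and (ii) certify boxes by exhibiting a single vertex--face pair with $w_j^Tv_i\geq 1$; that test lives in the full seven-dimensional parameterization, whereas in the reduced formulation each box requires an upper bound on a \emph{maximum} over the eliminated variables, which interval arithmetic handles far less gracefully. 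Second, the chirality concern is likely a red herring: projecting along $-d$ rather than $d$ already yields the mirror image of the silhouette, so the family $M_{\theta,\phi}$ over all angles, composed with $R_\alpha$, covers reflected shadows. The honest summary is that your write-up explains why the problem is hard rather than resolving it, which is consistent with the paper's own treatment of this statement as open.
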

Given the computational budget and accuracies we leveraged, we expect techniques beyond high-precision local optimization will be needed if such passages exist.

\begin{table}[t]
\centering
\begin{tabular}{|c|c|c|c|}
\hline
Archimedean Solid & Hours & Prior Best $\mu$ & Our Best $\mu$\\
\hline
Truncated Tetrahedron & 48 & 1.014210\cite{steininger2021} & \bf{1.014255711995} \\
Cuboctahedron & 48 & 1.01461\cite{steininger2021} & \it{1.014611872354} \\
Truncated Cube & 48 & 1.030659\cite{steininger2021} & \bf{1.030661650181} \\
Truncated Octahedron & 48 & 1.014602\cite{steininger2021} & 1.014566571546 \\
Rhombicuboctahedron & 48 & 1.012819\cite{steininger2021} & 1.012785597490 \\
Truncated Cuboctahedron & 48 & 1.006563\cite{steininger2021} & 1.006561784960 \\
Icosidodecahedron & 48 & 1.000878\cite{steininger2021} & \bf{1.000885427887} \\
Truncated Dodecahedron & 48 & 1.001612\cite{steininger2021} & \bf{1.001614361787} \\
Truncated Icosahedron & 48 & 1.001955\cite{steininger2021} & \bf{1.001965186189}\\
Rhombicosidodecahedron & 180 & --- & 0.999999999999 \\
Truncated Icosidodecahedron & 48 & 1.002048\cite{steininger2021} & 1.002046507167 \\
Snub Dodecahedron & 180 & --- & 0.999999999999 \\
Snub Cube & 180 & --- & 0.999999999999 \\
\hline
\end{tabular}
\caption{Results of repeated trials optimizing passages for each Archimedean solid via~\eqref{eq:subgrad} for 48 hours each. Improvements on known lower bounds for their Nieuwland constant are denoted by bold, accuracy improvements in italics.}
\label{tab:archimedean}
\end{table}

\subsection{Improvements for Catalan Solids and Johnson Solids.}
\begin{figure}
    \centering
    
        \begin{subfigure}[b]{0.19\textwidth}
        \centering
        \includegraphics[width=\linewidth, trim={2.1cm 2.1cm 2.1cm 2.1cm}, clip]{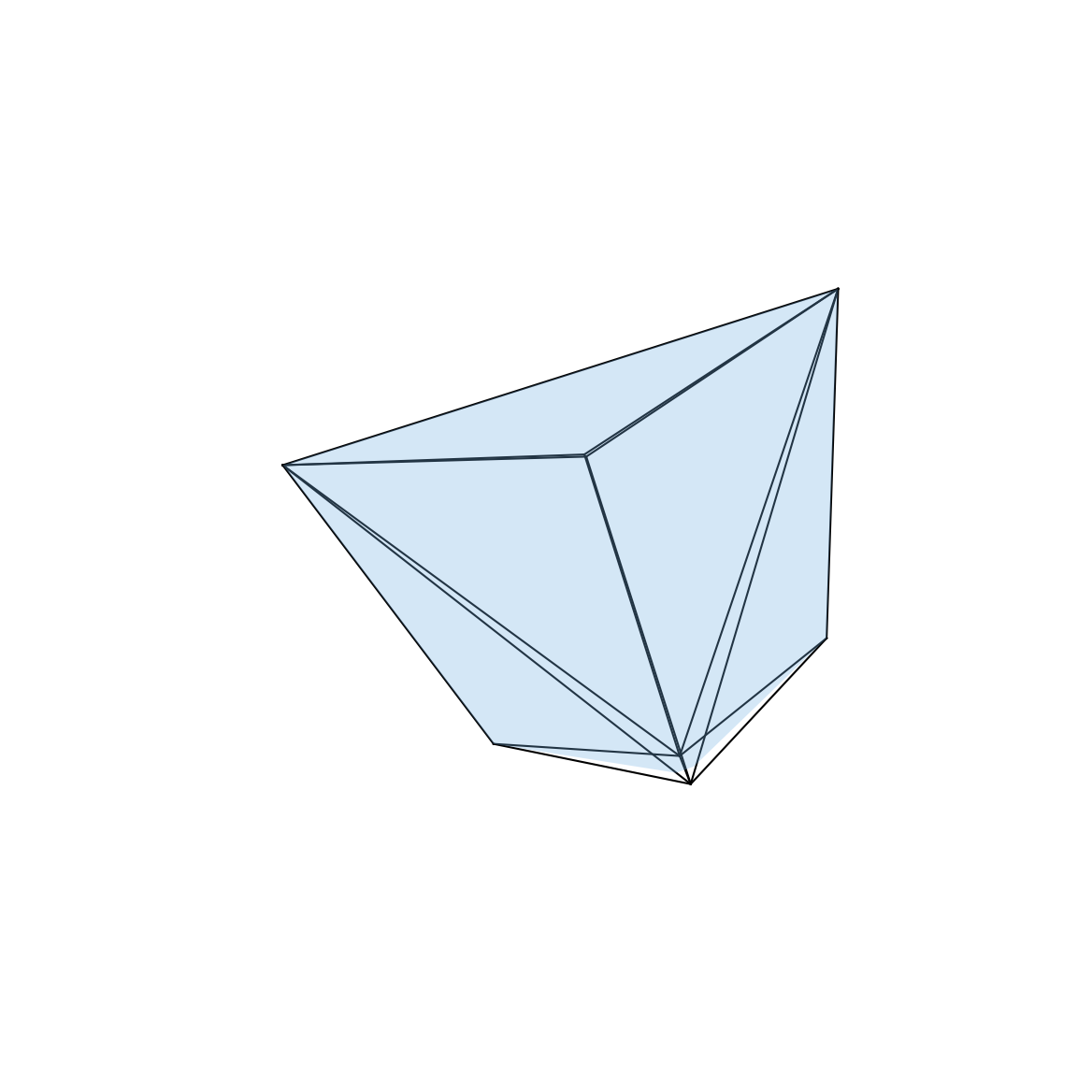}
        \caption{\centering Triakis Tetrahedron}
    \end{subfigure}
    \hfill
    \begin{subfigure}[b]{0.19\textwidth}
        \centering
        \includegraphics[width=\linewidth, trim={2.1cm 2.1cm 2.1cm 2.1cm}, clip]{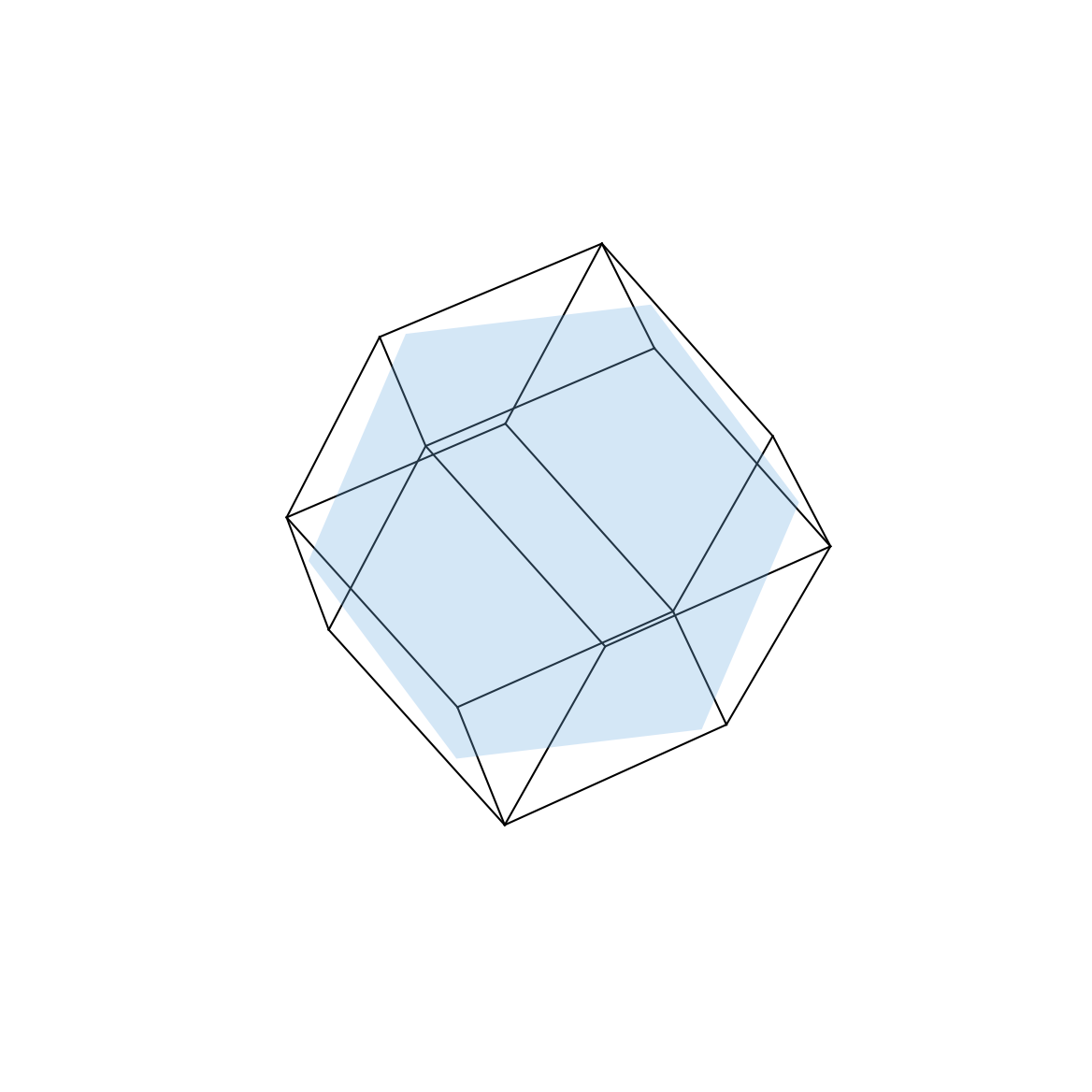}
        \caption{\centering Rhombic Dodecahedron}
    \end{subfigure}
    \hfill
    \begin{subfigure}[b]{0.19\textwidth}
        \centering
        \includegraphics[width=\linewidth, trim={2.1cm 2.1cm 2.1cm 2.1cm}, clip]{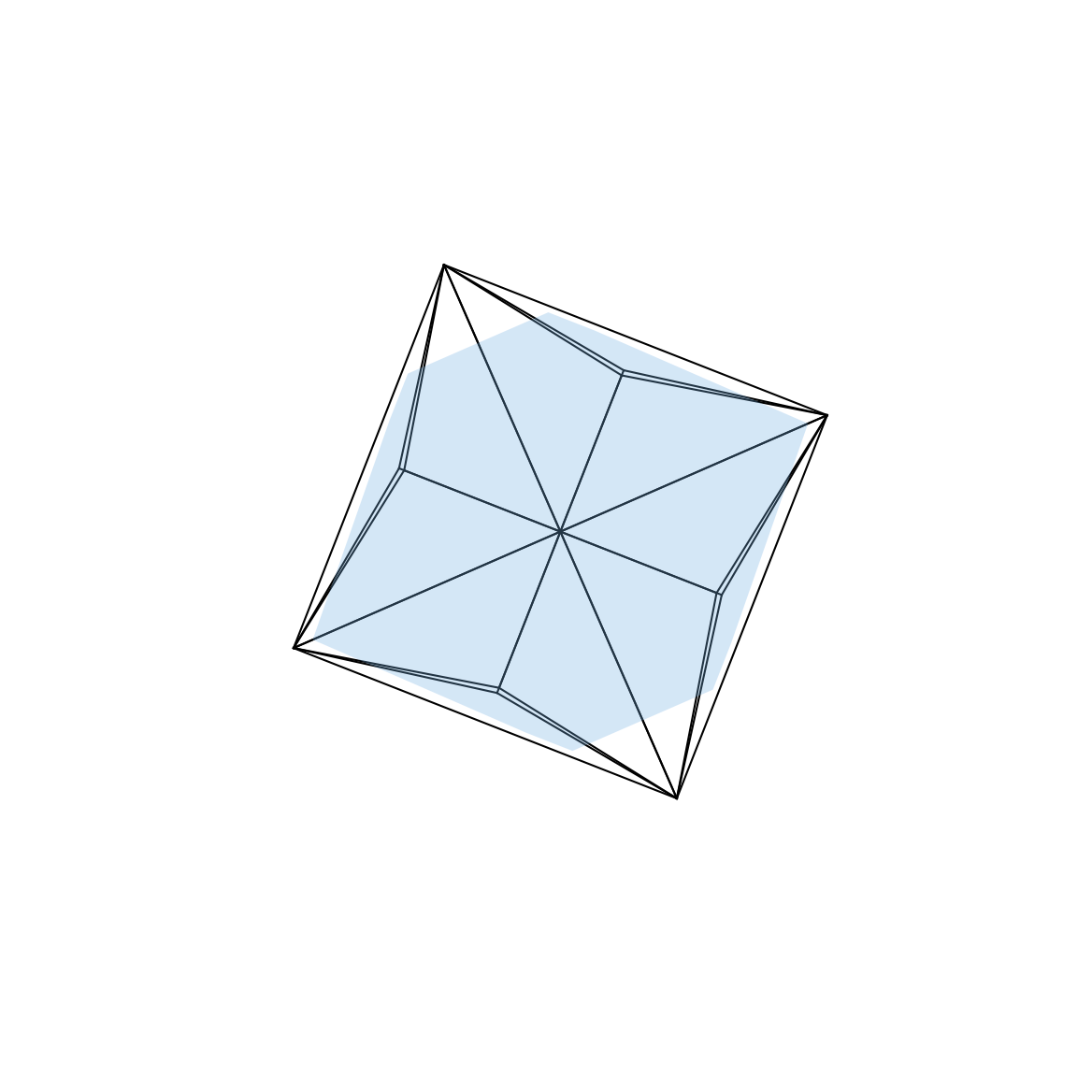}
        \caption{\centering Triakis Octahedron}
    \end{subfigure}
    \vskip\baselineskip\vskip-9pt
    \hfill
    \begin{subfigure}[b]{0.19\textwidth}
        \centering
        \includegraphics[width=\linewidth, trim={2.1cm 2.1cm 2.1cm 2.1cm}, clip]{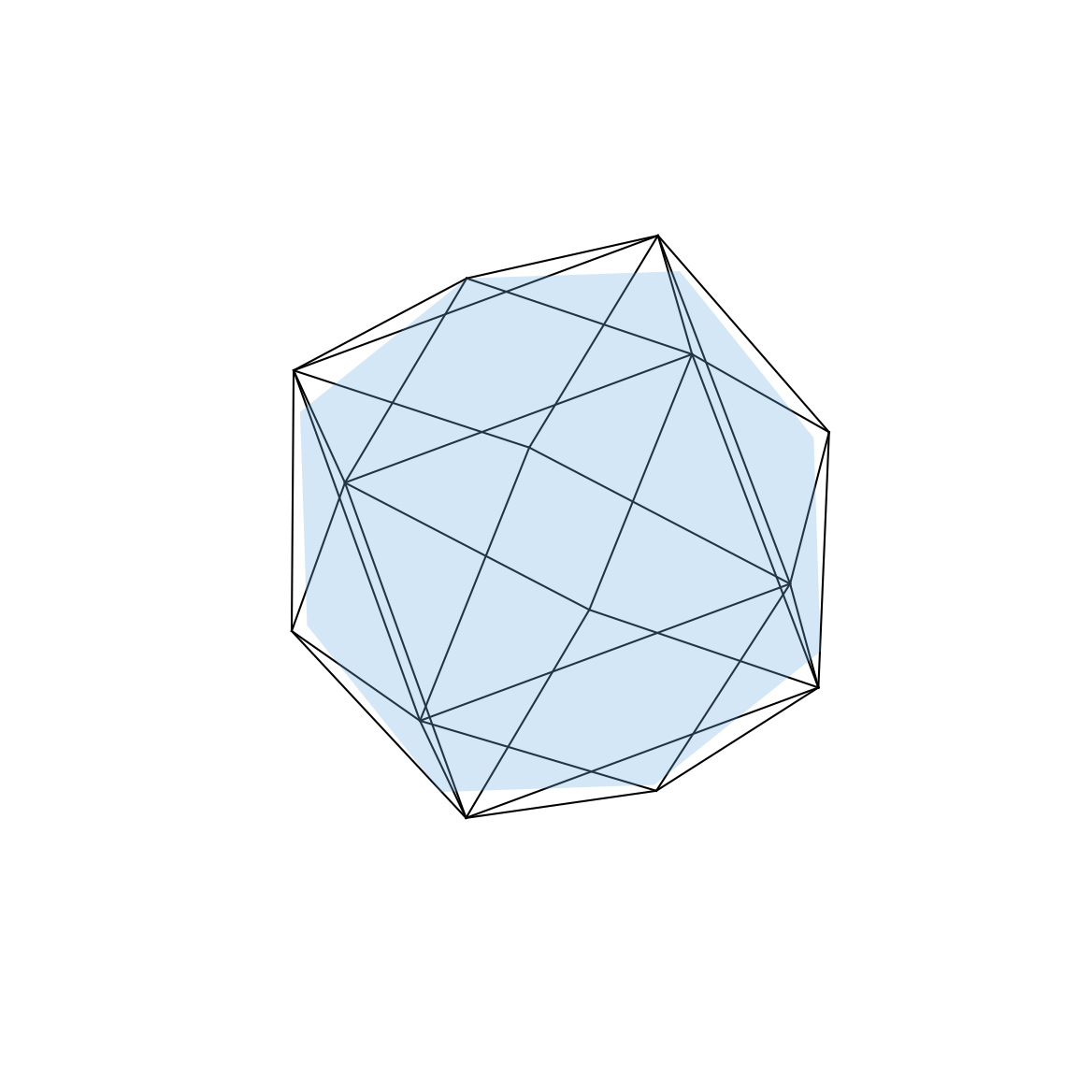}
        \caption{\centering Tetrakis Hexahedron}
    \end{subfigure}
    \hfill
    \begin{subfigure}[b]{0.19\textwidth}
        \centering
        \includegraphics[width=\linewidth, trim={2.1cm 2.1cm 2.1cm 2.1cm}, clip]{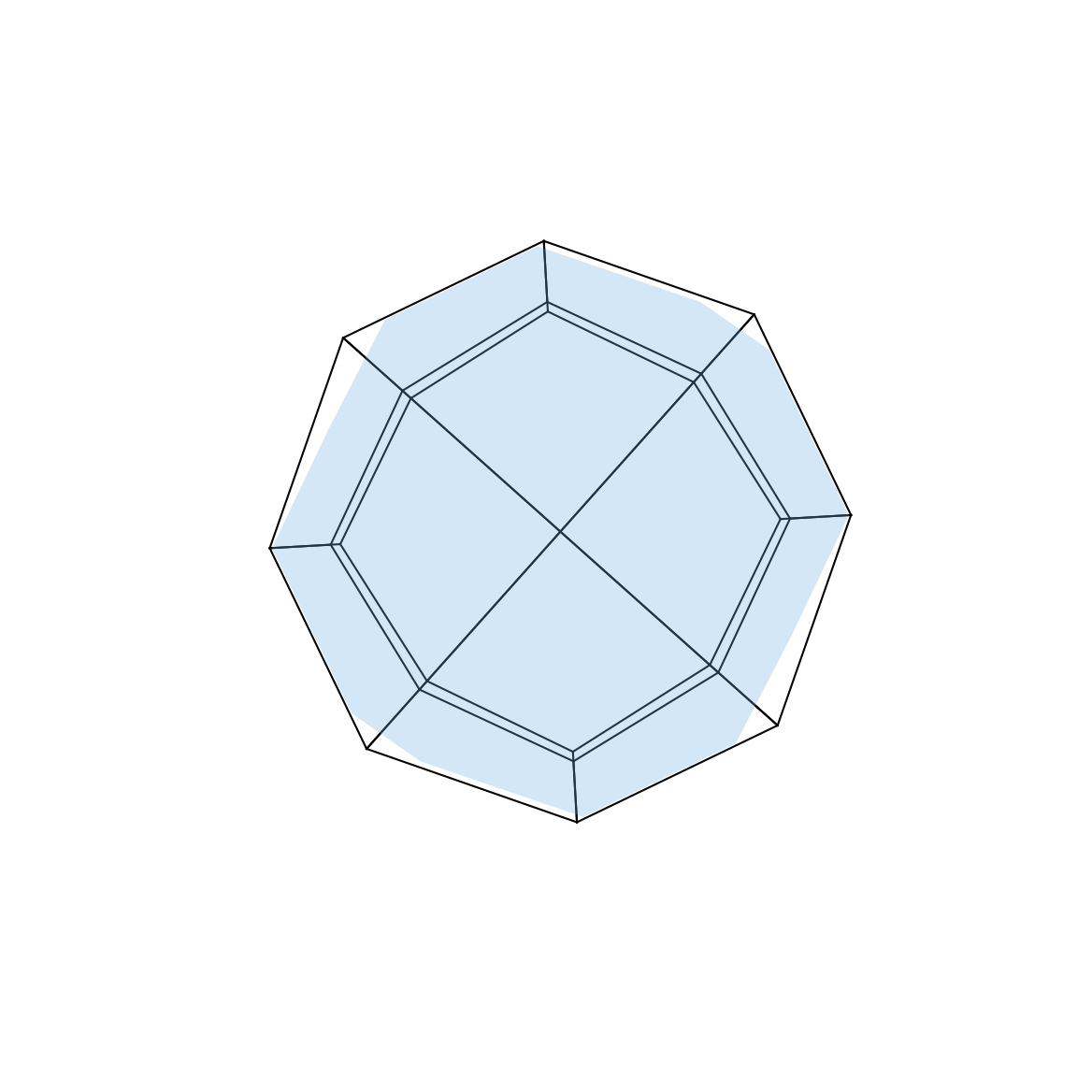}
        \caption{\centering Deltoidal Icositetrahedron}
    \end{subfigure}
    \hfill \ 
    \vskip\baselineskip\vskip-9pt
    \begin{subfigure}[b]{0.19\textwidth}
        \centering
        \includegraphics[width=\linewidth, trim={2.1cm 2.1cm 2.1cm 2.1cm}, clip]{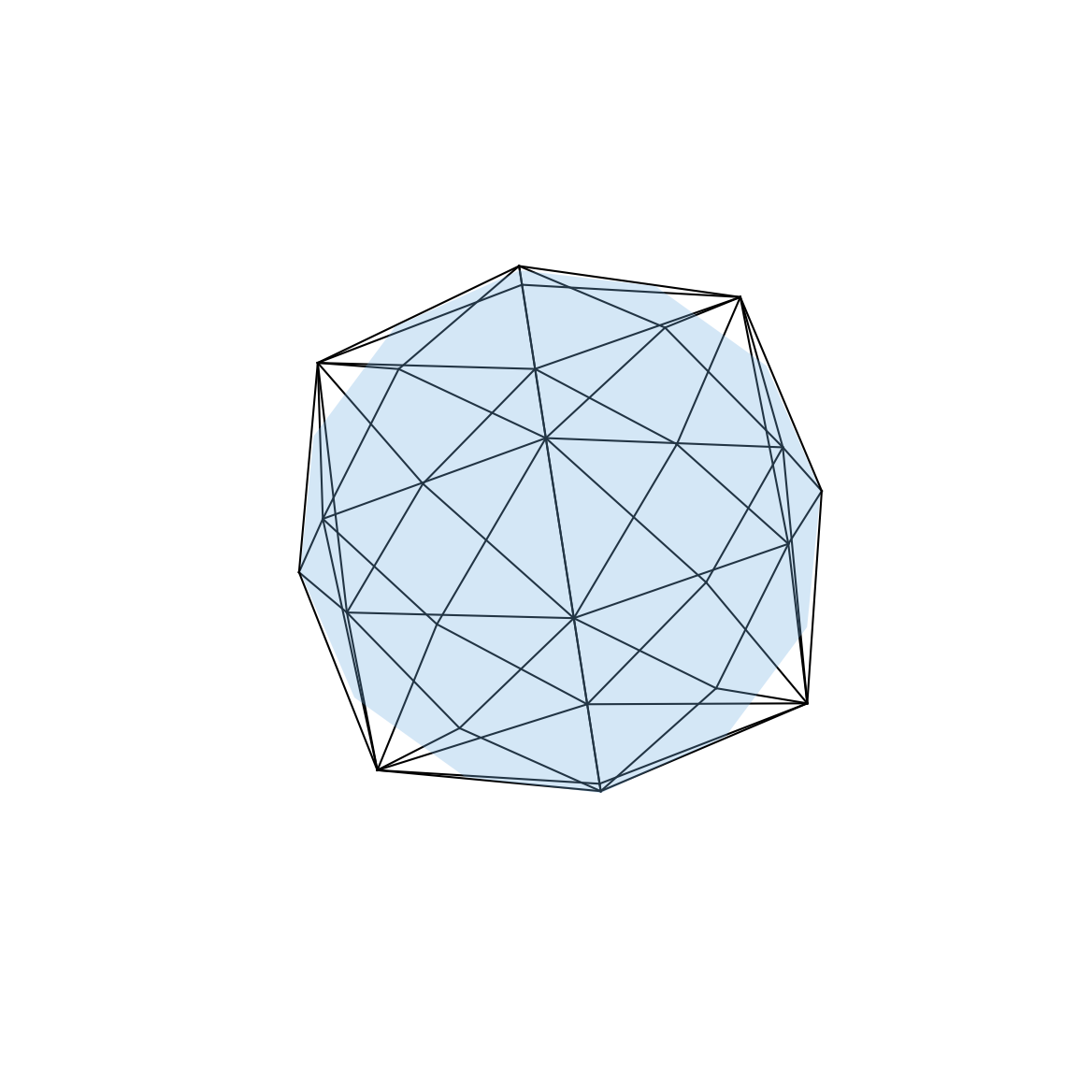}
        \caption{\centering Disdyakis Dodecahedron}
    \end{subfigure}
    \hfill
    \begin{subfigure}[b]{0.19\textwidth}
        \centering
        \includegraphics[width=\linewidth, trim={2.1cm 2.1cm 2.1cm 2.1cm}, clip]{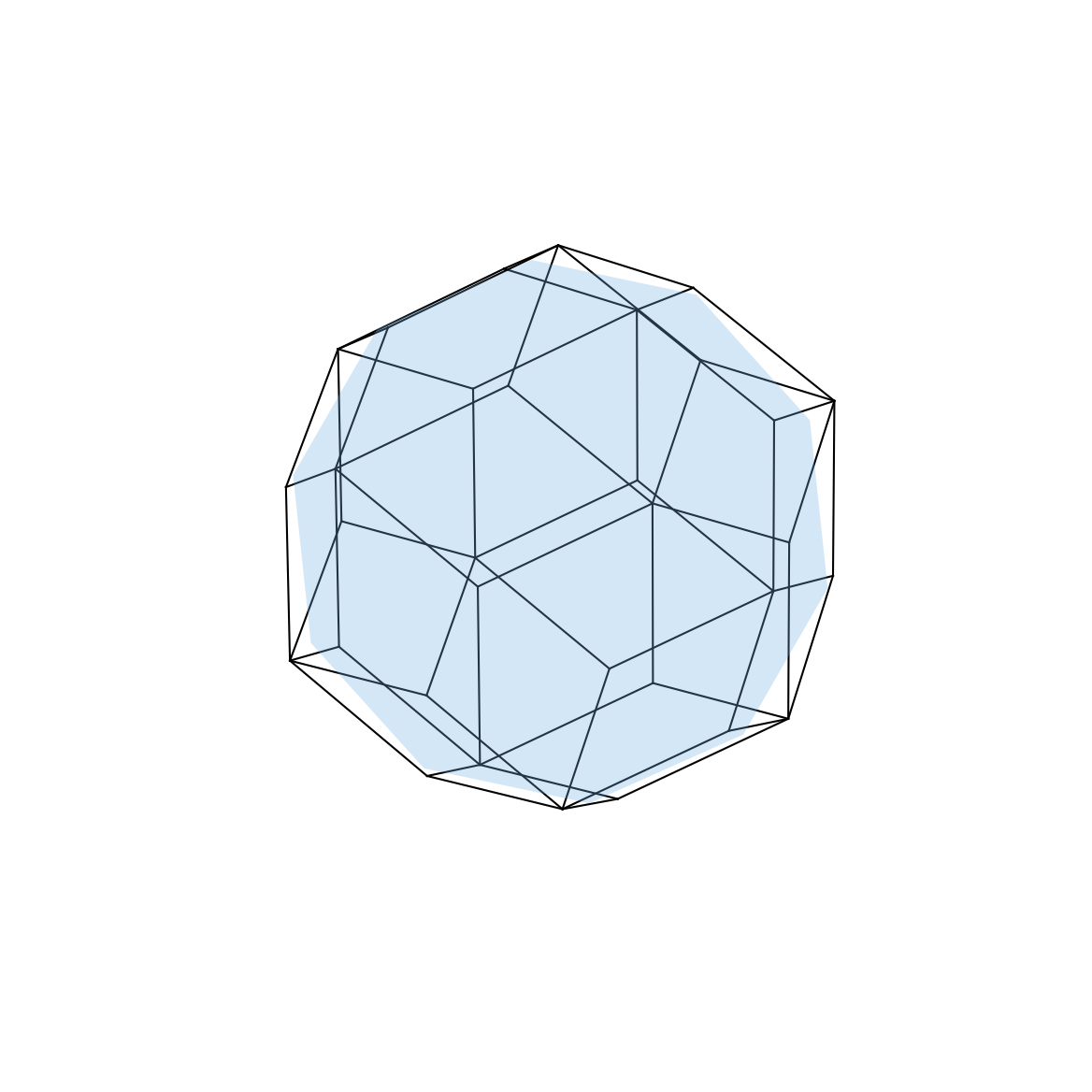}
        \caption{\centering Rhombic Triacontahedron}
    \end{subfigure}
    \hfill
    \begin{subfigure}[b]{0.19\textwidth}
        \centering
        \includegraphics[width=\linewidth, trim={2.1cm 2.1cm 2.1cm 2.1cm}, clip]{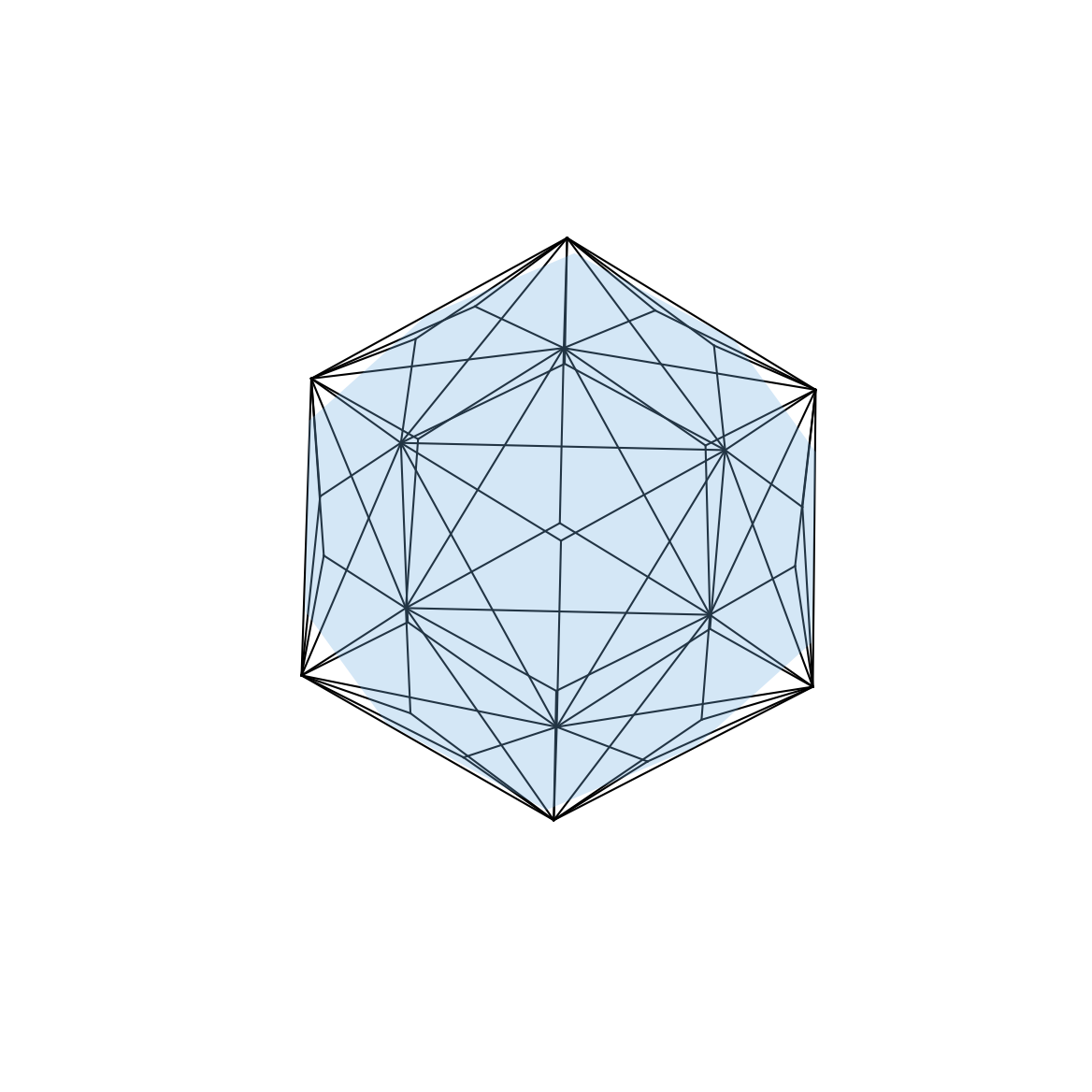}
        \caption{\centering Triakis Icosahedron}
    \end{subfigure}
    \vskip\baselineskip\vskip-9pt
    \hfill 
    \begin{subfigure}[b]{0.19\textwidth}
        \centering
        \includegraphics[width=\linewidth, trim={2.1cm 2.1cm 2.1cm 2.1cm}, clip]{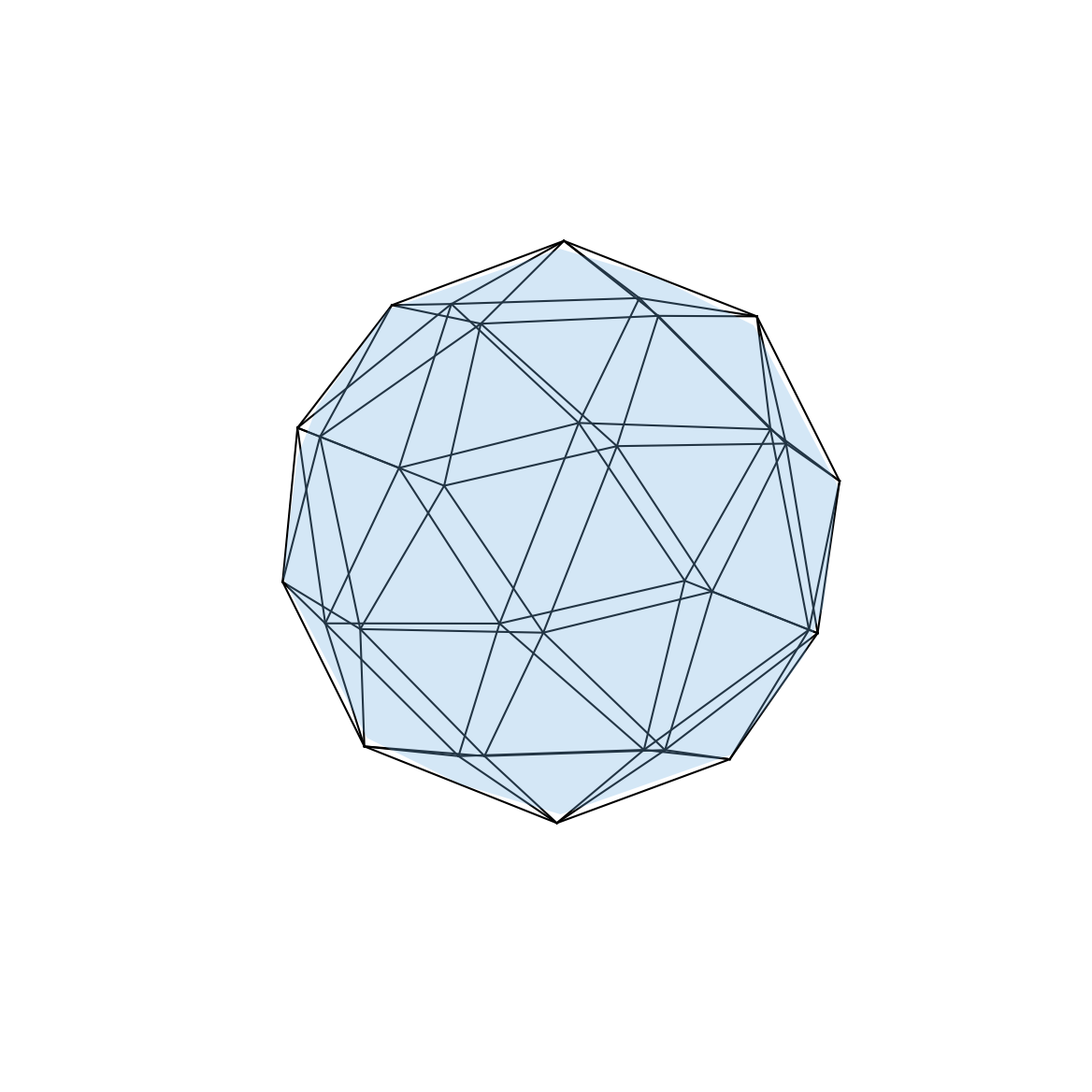}
        \caption{\centering Pentakis Dodecahedron}
    \end{subfigure}
    \hfill
    \begin{subfigure}[b]{0.19\textwidth}
        \centering
        \includegraphics[width=\linewidth, trim={2.1cm 2.1cm 2.1cm 2.1cm}, clip]{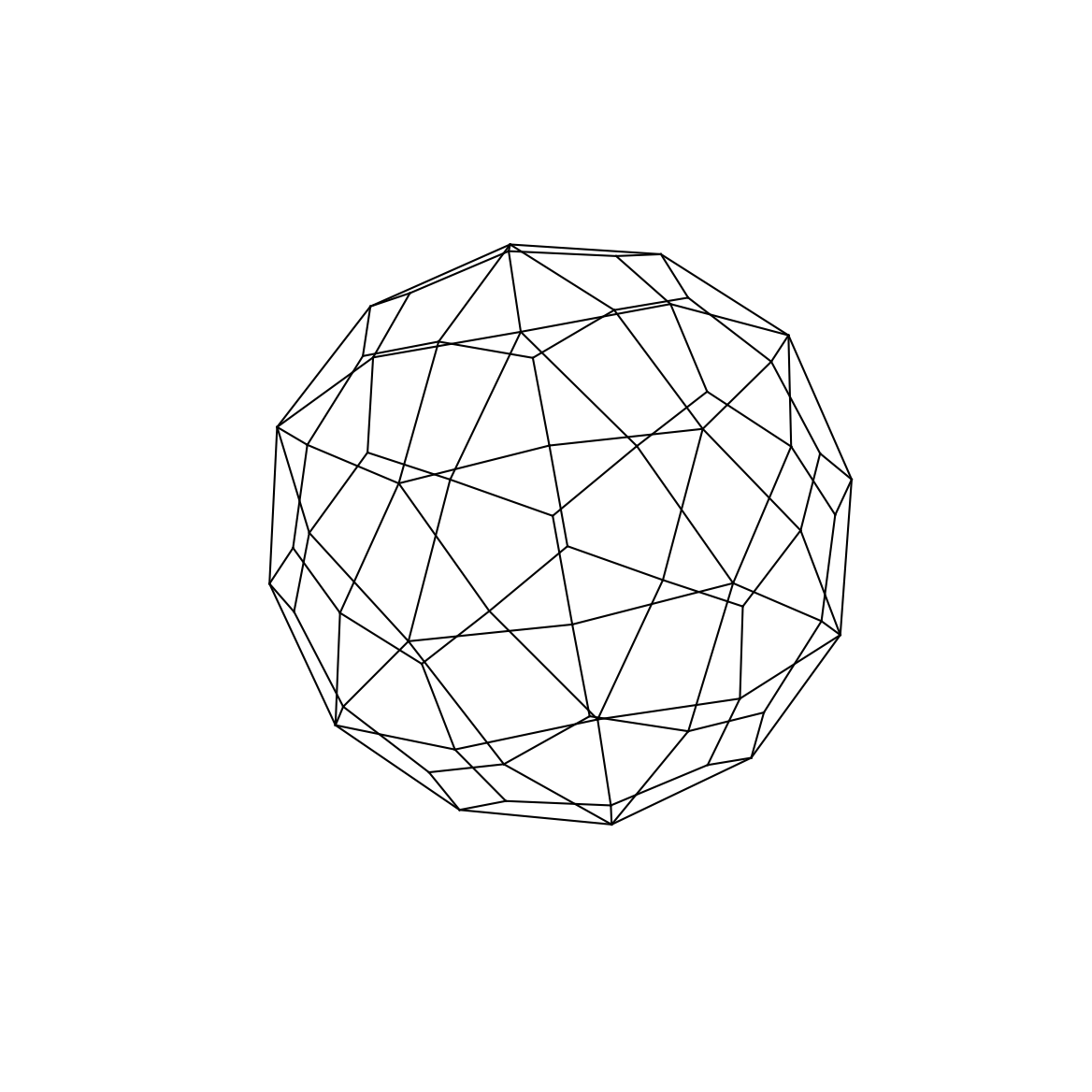}
        \caption{\centering Deltoidal Hexecontahedron}
    \end{subfigure}
    \hfill \ 
    \vskip\baselineskip\vskip-9pt
    \begin{subfigure}[b]{0.19\textwidth}
        \centering
        \includegraphics[width=\linewidth, trim={2.1cm 2.1cm 2.1cm 2.1cm}, clip]{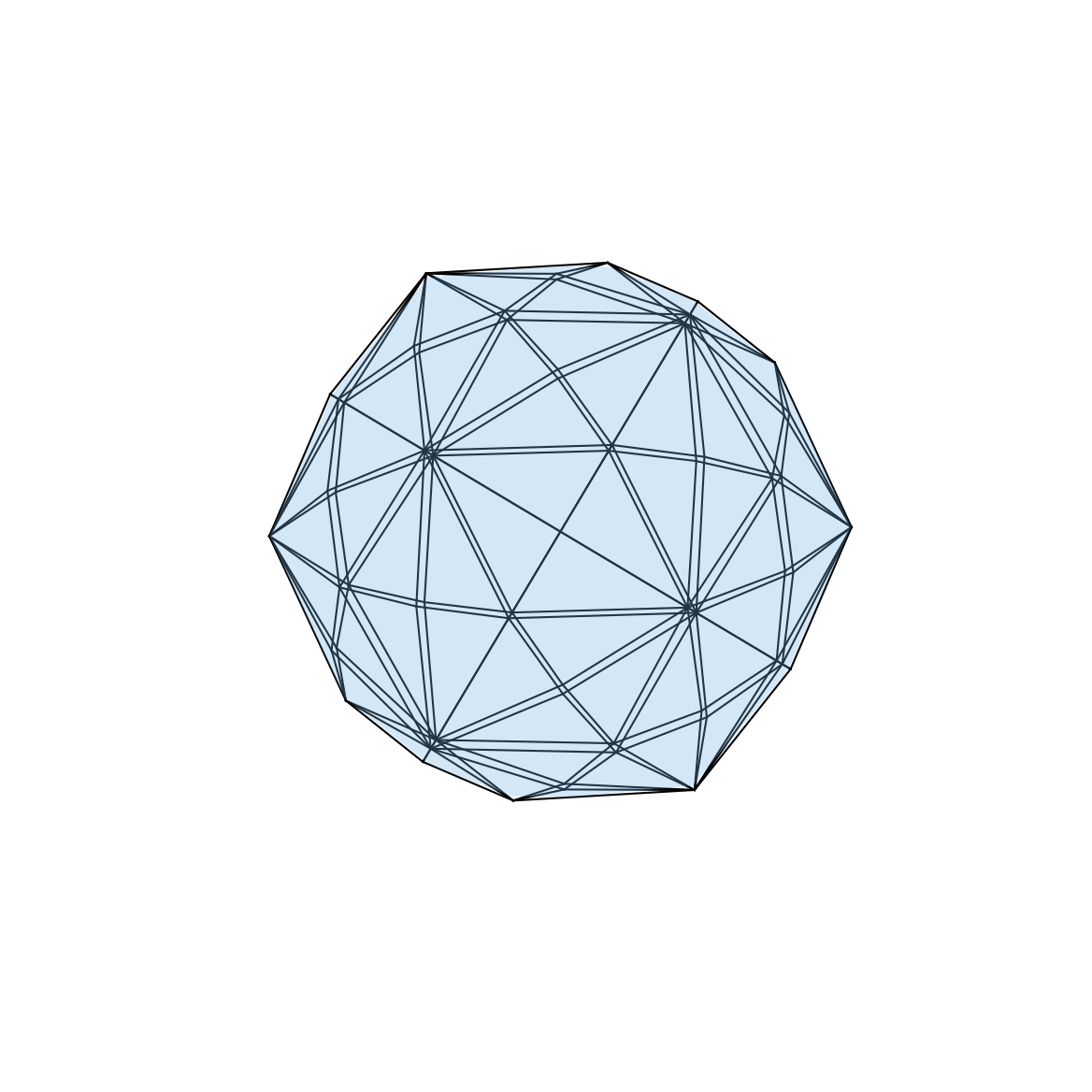}
        \caption{\centering Disdyakis Triacontahedron}
    \end{subfigure}
    \hfill
    \begin{subfigure}[b]{0.19\textwidth}
        \centering
        \includegraphics[width=\linewidth, trim={2.1cm 2.1cm 2.1cm 2.1cm}, clip]{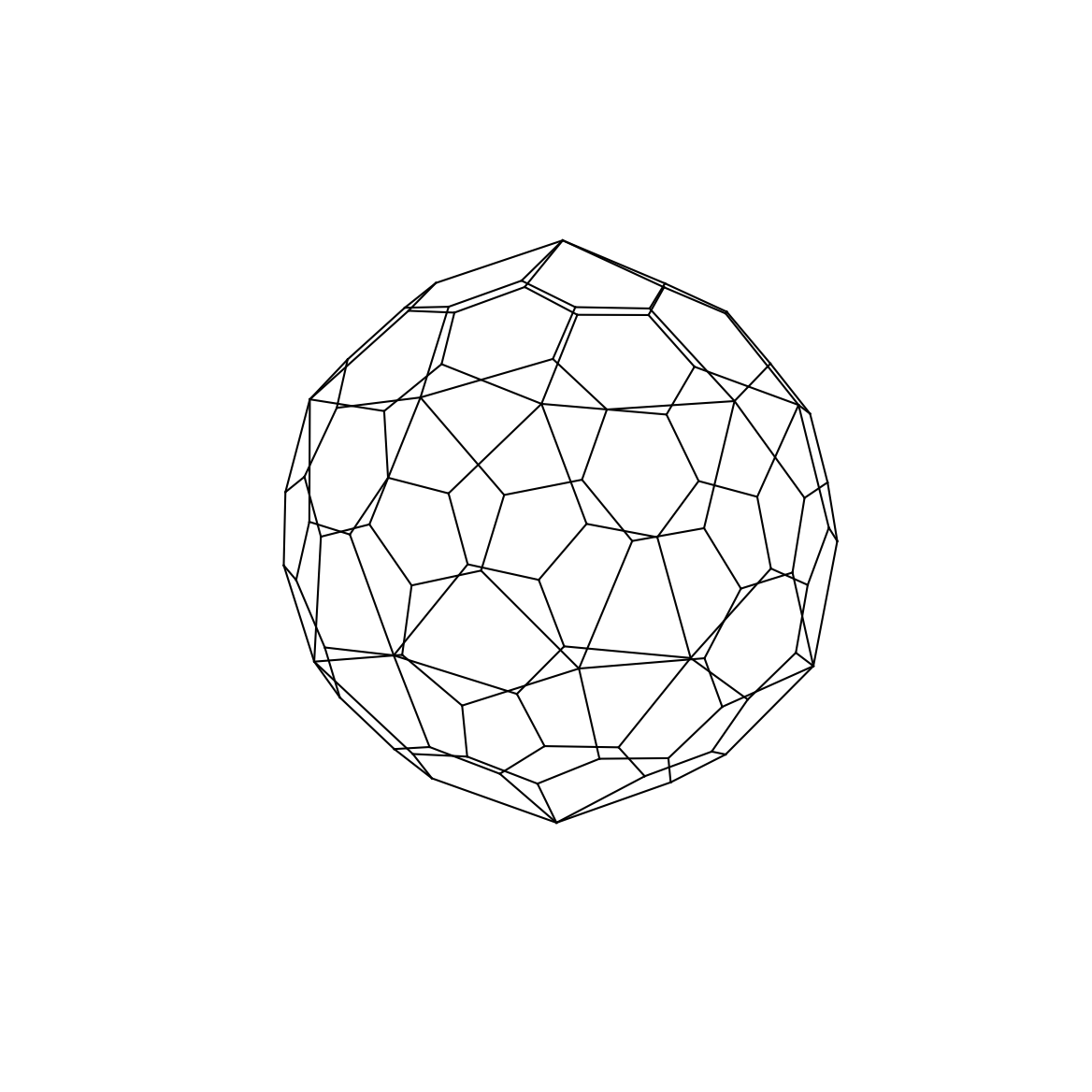}
        \caption{\centering Pentagonal Hexecontahedron}
    \end{subfigure}
    \hfill
    \begin{subfigure}[b]{0.19\textwidth}
        \centering
        \includegraphics[width=\linewidth, trim={2.1cm 2.1cm 2.1cm 2.1cm}, clip]{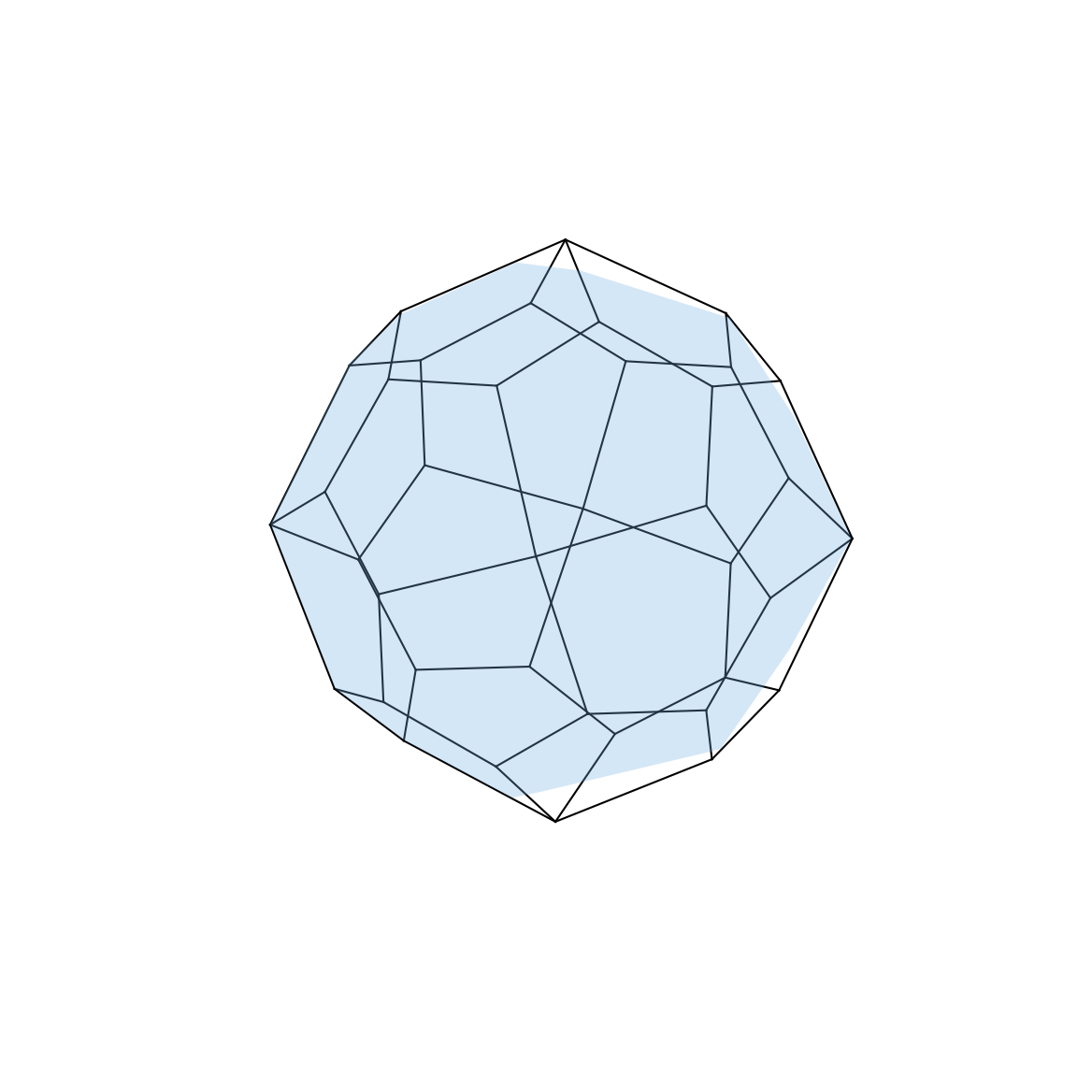}
        \caption{\centering Pentagonal Icositetrahedron}
    \end{subfigure}
   \caption{(New) Best-known, highly optimized passages for the thirteen Catalan solids and bounds on their Nieuwland constants. See Table~\ref{tab:catalan} for exact numerically computed constants.}
    \label{fig:catalan}
\end{figure}

Recall that the numerical search of Fredriksson in~\cite{fredriksson2024} showed that all but two Catalan solids and five Johnson solids are Rupert. Our local search improved many of the best-known passages among these but did not resolve any of these seven remaining shapes with open Rupertness. 

For every Catalan solid, our method matched or improved the previously best-known Nieuwland constant lower bound: For 8 of the 13 Catalan solids, our method strictly improved on the best-known passage. Further, it recovered the passages for triakis tetrahedron and pentagonal icositetrahedron, which Fredriksson~\cite{fredriksson2024} recently discovered, providing them with further digits of accuracy.

Although prior works identified passages for 87 out of the 92 Johnson solids, they did not report the best $\mu$ found. Such values only exist in the literature for the latest 5 Johnson solids that~\cite{fredriksson2024} discovered a passage for. Our method nontrivially improved the Nieuwland lower bound for four of these five: the gyroelongated pentagonal rotunda (J25), the gyroelongated square bicupola (J45), the gyroelongated pentagonal cupolarotunda (J47), and the triaugmented truncated dodecahedron (J71). Our high precision locally optimized lower bounds are presented in Table~\ref{tab:johnson}.

As a final observation, we note Conjecture~\ref{conj:dualConstants} does not appear to generalize well beyond Platonic solids. While our computations of constants for Platonic solids all agreed with their dual to at least 12 digits, substantial gaps exist between these in our high precision solves for Archimedean-Catalan pairs and Johnson duals. For example, our highly optimized passage for the truncated tetrahedron has $\mu$ exceeded one by thousands of times more than the best passage found on its dual, the triakis tetrahedron.

\begin{table}[t]
\centering
\begin{tabular}{|c|c|c|c|}
\hline
Catalan Solid & Hours & Prior Best $\mu$ & Best $\mu$ Seen \\
\hline
Triakis Tetrahedron & 48 & 1.000004\cite{fredriksson2024} & \it{1.000004055715} \\
Rhombic Dodecahedron & 48 & 1.027201 \cite{steininger2021} & \it{1.027180977829} \\
Triakis Octahedron & 48 & 1.030648\cite{steininger2021} & \bf{1.030662492707} \\
Tetrakis Hexahedron & 48 & 1.009632\cite{steininger2021} & \bf{1.011298388772} \\
Deltoidal Icositetrahedron & 48 & 1.007632\cite{steininger2021} & \bf{1.007636465375} \\
Disdyakis Dodecahedron & 48 & 1.0025\cite{steininger2021} & \bf{1.004500281028} \\
Rhombic Triacontahedron & 48 & 1.007037\cite{steininger2021} & \bf{1.007051479075} \\
Triakis Icosahedron & 48 & 1.001304\cite{steininger2021} & \bf{1.001308981561} \\
Pentakis Dodecahedron & 48 & 1.001845\cite{steininger2021} & \bf{1.001867227741} \\
Deltoidal Hexecontahedron & 180 & --- & 0.999999999999 \\
Disdyakis Triacontahedron & 48 & 1.00021\cite{steininger2021} & \bf{1.000743186023} \\
Pentagonal Hexecontahedron & 180 & --- & 0.999999999999 \\
Pentagonal Icositetrahedron & 48 & 1.000436\cite{fredriksson2024} & \it{1.000436141334} \\
\hline
\end{tabular}
\caption{Results of repeated trials optimizing passages for each Catalan solid via~\eqref{eq:subgrad} for at least 48 hours with improvements in bold and accuracy improvements in italics. Ordered such that each solid is dual to the corresponding solid in Table~\ref{tab:archimedean}.}
\label{tab:catalan}
\end{table}

\begin{table}[ht]\centering\begin{tabular}{|c|c|c|c|c|c|}\hline
 & Best $\mu$ Seen &  & Best $\mu$ Seen &  & Best $\mu$ Seen \\
\hline
J1 & \textbf{1.171572874948} & J32 & \textbf{1.022600150930} & J63 & \textbf{1.012931518383} \\
J2 & \textbf{1.051462224237} & J33 & \textbf{1.022600150839} & J64 & \textbf{1.023502155400} \\
J3 & \textbf{1.062917221135} & J34 & \textbf{1.000694527524} & J65 & \textbf{1.069918948973} \\
J4 & \textbf{1.082392200292} & J35 & \textbf{1.114806469953} & J66 & \textbf{1.037632799011} \\
J5 & \textbf{1.051462224238} & J36 & \textbf{1.115511372455} & J67 & \textbf{1.042517601422} \\
J6 & \textbf{1.022600150935} & J37 & \textbf{1.013208734428} & J68 & \textbf{1.001614361787} \\
J7 & \textbf{1.116804290106} & J38 & \textbf{1.042016668227} & J69 & \textbf{1.014484897242} \\
J8 & \textbf{1.140943757362} & J39 & \textbf{1.041930944390} & J70 & \textbf{1.003759497596} \\
J9 & \textbf{1.013466016137} & J40 & \textbf{1.007239712259} & J71 & \textbf{1.000882983008} \\
J10 & \textbf{1.010317644657} & J41 & \textbf{1.007206668263} & J72 & --- \\
J11 & \textbf{1.014643891235} & J42 & \textbf{1.046011088608} & J73 & --- \\
J12 & \textbf{1.039963426276} & J43 & \textbf{1.046022855131} & J74 & --- \\
J13 & \textbf{1.043612947553} & J44 & \textbf{1.006295254109} & J75 & --- \\
J14 & \textbf{1.116804339142} & J45 & \textbf{1.000020986354} & J76 & \textbf{1.000228660525} \\
J15 & \textbf{1.224744871266} & J46 & \textbf{1.000226849716} & J77 & --- \\
J16 & \textbf{1.035700492441} & J47 & \textbf{1.000082754534} & J78 & \textbf{1.001845783918} \\
J17 & \textbf{1.012458022219} & J48 & \textbf{1.002390043064} & J79 & \textbf{1.000397518135} \\
J18 & \textbf{1.039869879356} & J49 & \textbf{1.069686039279} & J80 & \textbf{1.002288221901} \\
J19 & \textbf{1.039736400938} & J50 & \textbf{1.124651673789} & J81 & \textbf{1.002764894770} \\
J20 & \textbf{1.042016605980} & J51 & \textbf{1.008115252776} & J82 & \textbf{1.003010991091} \\
J21 & \textbf{1.007297585982} & J52 & \textbf{1.036449361724} & J83 & \textbf{1.002269779677} \\
J22 & \textbf{1.001078125630} & J53 & \textbf{1.098120497031} & J84 & \textbf{1.168675494427} \\
J23 & \textbf{1.000303695345} & J54 & \textbf{1.099668218694} & J85 & \textbf{1.029541060803} \\
J24 & \textbf{1.000434587899} & J55 & \textbf{1.151612677378} & J86 & \textbf{1.080318705772} \\
J25 & \textbf{1.000082039977} & J56 & \textbf{1.125778230778} & J87 & \textbf{1.062410334923} \\
J26 & \textbf{1.043994054861} & J57 & \textbf{1.019956076347} & J88 & \textbf{1.261027884055} \\
J27 & \textbf{1.044484298401} & J58 & \textbf{1.014157239071} & J89 & \textbf{1.029137337968} \\
J28 & \textbf{1.082392200292} & J59 & \textbf{1.028772884206} & J90 & \textbf{1.019733439708} \\
J29 & \textbf{1.082392200292} & J60 & \textbf{1.014961907470} & J91 & \textbf{1.203409213649} \\
J30 & \textbf{1.051462224238} & J61 & \textbf{1.006280581240} & J92 & \textbf{1.070466268215} \\
J31 & \textbf{1.051462224238} & J62 & \textbf{1.023269308745} &  & \\

\hline\end{tabular}
\caption{Results of repeated trials optimizing passages for each Johnson solid via~\eqref{eq:subgrad} for at least 16 hours each with improvements over prior best bounds denoted in bold. Note for most Johnson solids, the only bounds reported in prior works we can compare with is that a passage was found with $\mu$ strictly larger than one (without providing an explicit numerical value).}
\label{tab:johnson}
\end{table}

\section{Conclusion.}
Classic convex polyhedra with unknown Rupertness remain. Our methods provided nontrivial improvements in the constants of more than half of the Platonic, Archimedean, and Catalan solids and most of the Johnson solids. These techniques have likely exhausted the level of improvements possible from growing precision and computational budgets in iterative local optimization. As a result, future success in proving or disproving the Rupertness of these remaining polyhedra may require the development of novel methods. Our Conjecture~\ref{conj:tetrahedron} presents a structured target for future work, establishing whether our discovered $\mu = \frac{\sqrt{6}}{1+\sqrt{2}}$ tetrahedron passage is optimal. 
        
    {\small
    \bibliographystyle{unsrt}
    \bibliography{MonthlyReferences}
    }
\end{document}